\newcounter{licznik}[section]
 \newtheorem{theorem}[licznik]{Theorem}
 \newtheorem{lemma}[licznik]{Lemma}
\newtheorem{prop}[licznik]{Proposition}
\newtheorem{cor}[licznik]{Corollary}
 \newtheorem{remark}[licznik]{Remark}
\def\tl{\tilde}
\def\ve{\varepsilon}
\def\ef{\mathcal{F}}
\def\ee{\mathbb{E}}
\def\er{\mathbb{R}}
\def\bse{\mathcal{E}}
\def\prob{\mathbb{P}}
\newcommand\ind[1]{1_{#1}}
\newcommand\assumptionlabel[1]{\hspace\labelsep
                               \normalfont\bfseries #1\ \ \gdef\@currentlabel{#1}}
\newenvironment{assumption}
               {\medskip\list{}{\labelwidth\z@ \itemindent-\leftmargin
                        }}
               {\endlist}
\begin{document}
\title{Undiscounted optimal stopping with unbounded rewards\footnote{Research of both authors has been partly supported by NCN grant DEC-2012/07/B/ST1/03298.}}
\author{Jan Palczewski\footnote{School of Mathematics, University of Leeds, LS2 9JT, Leeds, United Kingdom} \and \L{}ukasz Stettner\footnote{Institute of Mathematics Polish Acad. Sci., Sniadeckich 8, 00-656 Warsaw, Poland, and Vistula University}}
\maketitle

\begin{abstract}
We study optimal stopping of Feller-Markov processes to maximise an undiscounted functional consisting of running and terminal rewards. In a finite-time horizon setting, we extend classical results to unbounded rewards. In infinite horizon, we resort to ergodic structure of the underlying process. When the running reward is mildly penalising for delaying stopping (i.e., its expectation under the invariant measure is negative), we show that an optimal stopping time exists and is given in a standard form as the time of first entrance to a closed set. This paper generalised Palczewski, Stettner (2014), Stoch Proc Appl 124(12) 3887-3920, by relaxing boundedness of rewards.
\medskip

Keywords: ergodic stopping, optimal stopping, non-uniformly ergodic Markov process, unbounded functional
\end{abstract}


%
%
%

\section{Introduction}\label{sec:intro}
Let $(X_t)$ be a Feller-Markov process on $(\Omega, F,(F_t))$ with values in a locally compact space $E$ with the metric $\rho$ and Borel $\sigma$ field ${\cal E}$. The process starting from $x$ at time $0$ generates a probability measure $\prob^x$; $\ee^x$ denotes a related expectation operator. Our goal  is to characterise the value function and optimal stopping times of an undiscounted stopping problem
\begin{equation}\label{eqn:main_functional}
v(x) = \sup_\tau \liminf_{T \to \infty} \ee^x \left\{ \int_0^{\tau \wedge T} f(X_s) ds + g(X_{\tau \wedge T}) \right\},
\end{equation}
where $f, g$ are continuous functions.  This is an extension of our results from \cite{palczewski2014}, where we assume that $f$ and $g$ are bounded. Here, we relax the boundedness assumption on $g$ and demonstrate the continuity of the value function and the form of optimal strategies under weak assumptions on the process $(g(X_t))$. Our main assumption is that $\mu(f) < 0$, where $\mu$ is the invariant measure of $(X_t)$, which encourages early stopping similarly as discounting does in a classical case (for details see \cite[Section 2]{palczewski2014}). Apart from being of interest on its own merit, the results of this paper are applied in \cite{Palczewski2016b} where we study impulse control problem with average cost per unit time functional under non-uniform ergodicity of the underlying Markov process and with unbounded costs of interventions. The unbounded costs lead to optimal stopping problems with the terminal reward $g$ that is unbounded from below.

Classical stopping problems for Feller-Markov processes employ discounting:
\[
\sup_\tau \ee^x \left\{ \int_0^{\tau} e^{-rs} f(X_s) ds + e^{-r\tau} g(X_{\tau}) \right\}.
\]
For bounded $f$ and $g$, this ensures that the functional is bounded and, also, that finite horizon problems approximate the above one uniformly in $x$. This implies continuity of the value function and, as a consequence, the form of optimal stopping times. In variational characterisations, the discounting is required to prove existence and uniqueness of solutions, see \cite{bensoussan1984}. Removal of the discounting invalidates all standard approaches. Undiscounted optimal stopping problems for bounded and negative $f, g$ were studied in \cite{Morimoto1991, Morimoto1994, shiryaev1978}. This highlights the aforementioned function of the integral term in \eqref{eqn:main_functional} of penalisation of delaying the stopping decision. It also has a technical advantage of having all terms negative under the expectation. As indicated above, we replace the requirement of $f$ to be negative with the condition that the integral of $f$ with respect to the invariant measure of $(X_t)$ be negative, and we remove boundedness assumptions on $g$.

Another class of methods successfully applied to optimal stopping problems rests on martingale theory and does not require Markovian structure of the underlying process. There, however, explicit integrability assumptions are required: $\ee^x \{ \int_0^\infty f^-(X_s) ds \} < \infty$ and the family of random variables $\{ g^-(X_\tau):\, \text{$\tau$-stopping time} \}$ is $\prob^x$-uniformly integrable, c.f. \cite{Peskir2006}. These assumption do not guarantee that the optimal value is finite, they merely ensure that the Snell envelope is well-defined. In the present paper we allow for both integrals of $f^+$ and $f^-$ to be infinite and do not limit the lower tail of $g(X_t)$. Conversely, we impose integrability assumption of the behaviour of $g^+(X_t)$ and on $f$ to ensure the value function $v(x)$ in \eqref{eqn:main_functional} is finite for any $x\in E$.


The paper is structured as follows. Section 2 provides general assumptions and preliminary results for zero-potentials of centred $f$. Sections 3-4 are devoted to the study of undiscounted stopping problems with unbounded terminal cost for finite and infinite horizon, respectively.

\section{Preliminaries}\label{sec:prem}
For a Markov process $(X_t)$, we define its transition probability measure $P_t(x, \cdot) := \prob^x \{ X_t \in \cdot \}$ and a corresponding semigroup $P_t$, acting on bounded Borel functions, $P_t \phi(x) = \ee^x \{ \phi(X_t) \}$. We make the following assumptions:
\begin{assumption}
\item[(A1)] \label{ass:weak_feller}
(Weak Feller property)
$$
P_t\, \mathcal{C}_0 \subseteq \mathcal{C}_0,
$$
where $\mathcal C_0$ is the space of continuous bounded functions $E \to \er$ vanishing in infinity.
\end{assumption}
\begin{assumption}

\item[(A2)] \label{ass:speed_ergodic} There is a unique probability measure $\mu$ on $\bse$, a function $K: E \to (0, \infty)$ bounded on compacts and a function $h:[0, \infty) \to \er_+$ such that $\int_0^\infty h(t) dt < \infty$ and for any $x \in E$
\[
\| P_t(x, \cdot) - \mu(\cdot) \|_{TV} \leq K(x) h(t),
\]
where $\| \cdot \|_{TV}$ denotes the total variation norm. Furthermore, $\ee^x\left\{K(X_T)\right\}<\infty$ for each $T\geq 0$.
\end{assumption}

Assumption \ref{ass:weak_feller} is necessary to establish the existence of optimal stopping times for general weak Feller processes (a counter-example when it is relaxed is provided at the end of Section 3.1 in \cite{Palczewski2008}). The class of weakly Feller processes \ref{ass:weak_feller} comprises Levy processes \cite[Theorem 3.1.9]{Applebaum2004}, solutions to stochastic differential equations with continuous coefficients driven by Levy processes (see, e.g., \cite[Theorem 6.7.2]{Applebaum2004}). Assumption \ref{ass:speed_ergodic} satisfied by non-uniform geometrically ergodic or polynomially ergodic processes with examples discussed in \cite[Section 6]{palczewski2014}.

%

Define a centred zero-potential of $f$
\begin{equation}\label{eqalphap}
q(x)=\ee^x\left\{\int_0^\infty \big(f(X_t)-\mu(f)\big)dt\right\}.
\end{equation}
\begin{lemma}\label{lq} \cite[Lemma 2.2]{Palczewski2016b}
Under \ref{ass:weak_feller} and \ref{ass:speed_ergodic}, $q$ is a continuous function and for any bounded stopping time $\tau$
\begin{equation}\label{eq0potent}
q(x)=\ee^x\left\{\int_0^\tau(f(X_t)-\mu(f))dt + q(X_\tau)\right\}.
\end{equation}
\end{lemma}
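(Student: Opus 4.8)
Write $\bar f:=f-\mu(f)$ and, for $T\ge 0$, $q_T(x):=\ee^x\{\int_0^T\bar f(X_t)\,dt\}$. The plan is to realise $q$ as the limit $q=\lim_{T\to\infty}q_T=\int_0^\infty P_t\bar f(x)\,dt$, reading the defining integral in \eqref{eqalphap} as this conditionally convergent improper integral rather than an absolutely convergent one. This reinterpretation is the conceptual crux: since $P_t|\bar f|(x)\to\mu(|\bar f|)$ (bounded test function in the TV bound), one has $\int_0^\infty P_t|\bar f|(x)\,dt=\infty$ in general, so no global Fubini is available and everything must be routed through finite horizons. On each finite horizon, however, $P_t|\bar f|(x)\le 2\|f\|_\infty$ is integrable in $t$, so Fubini gives $q_T(x)=\int_0^T P_t\bar f(x)\,dt$ unproblematically.

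The first quantitative ingredient I would record is the estimate $|P_t\bar f(x)|=|\int f\,d(P_t(x,\cdot)-\mu)|\le\|f\|_\infty\,\|P_t(x,\cdot)-\mu\|_{TV}\le\|f\|_\infty K(x)h(t)$, straight from \ref{ass:speed_ergodic}. Integrating in $t$ and using $\int_0^\infty h<\infty$ yields both finiteness, $|q(x)|\le\|f\|_\infty K(x)\int_0^\infty h(t)\,dt<\infty$, and, because $K$ is bounded on compacts, the tail bound $\sup_{x\in C}|q(x)-q_T(x)|\le\|f\|_\infty\big(\sup_{x\in C}K(x)\big)\int_T^\infty h(t)\,dt\to 0$ on every compact $C$.

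For continuity I would first upgrade \ref{ass:weak_feller} to $P_t\mathcal C_b\subseteq\mathcal C_b$: if $x_n\to x$ then $P_t(x_n,\cdot)\to P_t(x,\cdot)$ vaguely (test against $\mathcal C_0$, using $P_t\mathcal C_0\subseteq\mathcal C_0$), and since these are probability measures converging vaguely to a probability measure the convergence is in fact weak, whence $P_t\phi(x_n)\to P_t\phi(x)$ for every $\phi\in\mathcal C_b$. Thus each $P_t\bar f\in\mathcal C_b$; dominated convergence in the time integral (dominant $2\|f\|_\infty$) makes each $q_T$ continuous, and the uniform-on-compacts convergence $q_T\to q$ from the tail bound transfers continuity to $q$.

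Finally, for \eqref{eq0potent} with $\tau$ bounded by some $N$, I would fix $T>N$ and split $\int_0^T=\int_0^\tau+\int_\tau^T$ under $\ee^x$. The strong Markov property applied to the shifted integral gives $\ee^x\{\int_\tau^T\bar f(X_t)\,dt\mid\F_\tau\}=q_{T-\tau}(X_\tau)$, the horizon $T-\tau$ being $\F_\tau$-measurable, so $q_T(x)=\ee^x\{\int_0^\tau\bar f(X_t)\,dt\}+\ee^x\{q_{T-\tau}(X_\tau)\}$. Letting $T\to\infty$, the first term is fixed, the left side tends to $q(x)$, and $q_{T-\tau}(X_\tau)\to q(X_\tau)$ pointwise with the $T$-independent dominant $\|f\|_\infty K(X_\tau)\int_0^\infty h$, so dominated convergence yields the identity. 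The main obstacle is exactly this last step: it requires $\ee^x\{K(X_\tau)\}<\infty$ for bounded stopping times, which is where the integrability of $K$ along the process built into \ref{ass:speed_ergodic} is indispensable, and one must remain mindful throughout that only finite-horizon, TV-controlled manipulations are legitimate because $\int_0^\infty|\bar f(X_t)|\,dt$ fails to be integrable.
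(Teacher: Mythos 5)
The paper itself gives no proof of this lemma --- it is imported verbatim from \cite[Lemma 2.2]{Palczewski2016b} --- so I can only judge your argument on its own terms. Your architecture is the natural one and most of it is sound: the reinterpretation of \eqref{eqalphap} as $\lim_{T\to\infty}\ee^x\{\int_0^T\bar f(X_t)\,dt\}$, the bound $|P_t\bar f(x)|\le\|f\|_\infty K(x)h(t)$ from \ref{ass:speed_ergodic}, the uniform-on-compacts convergence $q_T\to q$, the vague-to-weak upgrade giving $P_t\,\mathcal C_b\subseteq\mathcal C_b$ (legitimate here because the $P_t(x,\cdot)$ are probability measures, so no mass escapes), and the strong Markov split $q_T(x)=\ee^x\{\int_0^\tau\bar f(X_t)\,dt\}+\ee^x\{q_{T-\tau}(X_\tau)\}$ are all correct.

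The gap is in the very last step, and you half-see it yourself. To pass to the limit in $\ee^x\{q_{T-\tau}(X_\tau)\}$ you invoke dominated convergence with dominant $\|f\|_\infty K(X_\tau)\int_0^\infty h$, asserting that $\ee^x\{K(X_\tau)\}<\infty$ is ``built into \ref{ass:speed_ergodic}''. It is not: the assumption reads $\ee^x\{K(X_T)\}<\infty$ \emph{for each} $T\ge 0$, i.e.\ for deterministic times only, and since $K$ carries no structural property (it is not assumed excessive or superharmonic), integrability at all deterministic times does not transfer to stopping times --- a stopping time can preferentially land where $K$ is large. The repair is standard and stays within your framework: run your argument first with $\tau=t$ deterministic, where the dominant $K(X_t)$ \emph{is} integrable by \ref{ass:speed_ergodic}; this yields $q(x)=\ee^x\{\int_0^t\bar f(X_s)\,ds+q(X_t)\}$ and hence, via the Markov property, that $M_t=\int_0^t\bar f(X_s)\,ds+q(X_t)$ is a martingale, right-continuous because $q$ is continuous and $X$ has right-continuous paths. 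Doob's optional sampling theorem for a right-continuous martingale at a bounded stopping time then gives $M_\tau=\ee^x\{M_N\mid\F_\tau\}$, which simultaneously establishes the integrability of $q(X_\tau)$ and the identity \eqref{eq0potent}, with no appeal to $\ee^x\{K(X_\tau)\}<\infty$.
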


\section{Optimal stopping on finite interval with unbounded terminal reward}\label{sec:stopping_T}
In this section we extend classical results on optimal stopping of a bounded functional. We consider the stopping problem
\begin{equation}
w_T(x)=\sup_{\tau \le T} \ee^x\left\{\int_0^\tau f(X_s)ds + g(X_\tau)\right\},
\end{equation}
where $f$ is continuous bounded, and $g$ is continuous but possibly \emph{unbounded}. We will  study continuity of $w_T$ and existence of optimal stopping times by looking at approximations with a sequence of optimal stopping problems
\[
\hat w^n_T(x) = \sup_{\tau \le T} \ee^x\left\{\int_0^\tau f(X_s)ds + (g(X_\tau) \vee (-n)) \wedge n \right\}.
\]
By standard arguments, see e.g. \cite[Corollary 2.3]{Stettner2011}, the mappings $(T, x) \mapsto \hat w^n_T(x)$ are continuous under \ref{ass:weak_feller}.

We will consider the following uniform version of a standard assumption (c.f. \cite[Chapter I, Section 2.2]{Peskir2006})
\begin{assumption}
\item[$(B)_T$] \label{ass:uniform_sup_integrability}
For every $x \in E$ there is a compact ball $K = \bar B(x, \delta)$ such that the random variable $\zeta_T = \sup_{t \in [0, T]} |g(X_t)|$ is uniformly integrable with respect to $\prob^y$ for $y \in K$, i.e.,
\[
\lim_{n \to \infty} \sup_{y \in K} \ee^y \{ \zeta_T \ind{\zeta_T > n} \} = 0.
\]
\end{assumption}
This assumption implies continuity of $w_t(x)$ in $t$ and $x$ as the following lemma shows.

\begin{lemma}\label{lem:continuity_unbounded}
Under  \ref{ass:uniform_sup_integrability}, $\hat w^n_t$ converges to $w_t$ uniformly in $(t, x) \in [0, T] \times K$ for a compact ball $K$ from assumption \ref{ass:uniform_sup_integrability}. If, additionally, assumption \ref{ass:weak_feller} holds, the mapping $(t, x) \mapsto w_t(x)$ is continuous.
\end{lemma}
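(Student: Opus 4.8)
The plan is to bound the gap $w_t - \hat w^n_t$ by a quantity that assumption $(B)_T$ forces to zero uniformly, and then to transfer continuity from the truncated value functions $\hat w^n$ (continuous by the standard result recalled before the statement) to $w$ through this uniform limit. The whole argument is driven by the supremum-process formulation of $(B)_T$.

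First I would record the elementary estimate for the truncation $g^{(n)} := (g \vee (-n)) \wedge n$, namely $|g(y) - g^{(n)}(y)| = (|g(y)| - n)^+ \le |g(y)|\,\ind{|g(y)| > n}$. The functionals defining $w_t$ and $\hat w^n_t$ share the same running term $\ee^y\{\int_0^\tau f(X_s)\,ds\}$ and differ only in the terminal reward, so for every stopping time $\tau \le t \le T$,
\[
\bigl| \ee^y\{g(X_\tau)\} - \ee^y\{g^{(n)}(X_\tau)\} \bigr| \le \ee^y\bigl\{(|g(X_\tau)| - n)^+\bigr\} \le \ee^y\{\zeta_T\,\ind{\zeta_T > n}\},
\]
where the final inequality uses $|g(X_\tau)| \le \zeta_T$ for $\tau \le T$ together with $(\zeta_T - n)^+ \le \zeta_T\,\ind{\zeta_T > n}$. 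Crucially this bound is independent of $\tau$ and of $t \in [0,T]$.

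Since the supremum over stopping times is $1$-Lipschitz, i.e. $|\sup_\tau a_\tau - \sup_\tau b_\tau| \le \sup_\tau |a_\tau - b_\tau|$, the previous estimate yields $\sup_{t \le T} |w_t(y) - \hat w^n_t(y)| \le \ee^y\{\zeta_T\,\ind{\zeta_T > n}\}$ for every $y$; note that the right-hand side is finite and, indeed, $w_t(y)$ is finite for $y \in K$ because $(B)_T$ renders $\zeta_T$ integrable while $f$ is bounded. Taking $\sup_{y \in K}$ and invoking \ref{ass:uniform_sup_integrability} makes the right-hand side vanish as $n \to \infty$, which is the asserted uniform convergence on $[0,T] \times K$. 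For continuity I would then argue locally: fix $(t_0, x_0) \in [0,T] \times E$, take $K = \bar B(x_0, \delta)$ from \ref{ass:uniform_sup_integrability} at $x_0$, and observe that on the neighbourhood $[0,T] \times B(x_0,\delta)$ the functions $\hat w^n$ (continuous under \ref{ass:weak_feller}) converge uniformly to $w$; a uniform limit of continuous functions being continuous, $w$ is continuous at $(t_0, x_0)$, and since this point is arbitrary, on all of $[0,T] \times E$.

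The main obstacle is really concentrated in the first step: the estimate must hold uniformly over all admissible $\tau$ at once, and this is exactly why $(B)_T$ is phrased through uniform integrability of the supremum process $\zeta_T = \sup_{t \le T}|g(X_t)|$ rather than of the family $\{g(X_\tau)\}$ alone. The single dominating term $\ee^y\{\zeta_T\,\ind{\zeta_T > n}\}$ does not see $\tau$ or $t$, so once \ref{ass:uniform_sup_integrability} supplies uniformity over $y \in K$ the remainder is mechanical. A minor point to keep in mind is that the compact ball $K$ depends on the base point $x_0$, so the uniform convergence is genuinely local and continuity is obtained point by point rather than globally in one stroke.
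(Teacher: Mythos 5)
Your proposal is correct and follows essentially the same route as the paper: bound $|w_t - \hat w^n_t|$ by $\sup_{\tau\le t}\ee^x\{|g(X_\tau)|\ind{|g(X_\tau)|>n}\}\le \ee^x\{\zeta_T\ind{\zeta_T>n}\}$, let $(B)_T$ kill this uniformly on $[0,T]\times K$, and transfer continuity from the $\hat w^n_t$ via locally uniform convergence. The only difference is expository detail (the explicit truncation identity and the remark on locality), not substance.
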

\begin{proof}
Easily,
\begin{align*}
\big| w_t(x) - \hat w^n_t(x) \big|
&\le \sup_{\tau \le t} \big| \ee^x \{ g(X_\tau) - (g(X_\tau) \vee (-n)) \wedge n \} \big|\\
&\le \sup_{\tau \le t} \ee^x \{ |g(X_\tau)| \ind{|g(X_\tau)| > n} \}
\le \ee^x \{ \zeta_T \ind{\zeta_T > n} \}.
\end{align*}
By assumption \ref{ass:uniform_sup_integrability} the estimate on the right-hand side converges to $0$ when $n \to \infty$ uniformly for $t \in [0, T]$ and $x$ from a compact set $K$. As remarked earlier, under \ref{ass:weak_feller}, $\hat w^n_t(x)$ is continuous in $(t, x)$ and the above uniform convergence implies that so is $w_t(x)$.
\end{proof}

\begin{remark}
In the above proof we could  use  a weaker condition than \ref{ass:uniform_sup_integrability}:
\begin{assumption}\label{ass: weaker general uniform sup}
\item[$(B')_T$]\label{ass: weaker gen} For every $x \in E$ there is a compact ball $K = \bar B(x, \delta)$ such that
\[
\lim_{n \to \infty} \sup_{y \in K} \sup_{\tau \le T} \ee^y \{ |g(X_\tau)| \ind{|g(X_\tau)| > n} \} = 0;
\]
\end{assumption}
or
\begin{assumption}
\item[$(B'')_T$]\label{ass:general}
For every $x \in E$ there is a compact ball $K = \bar B(x, \delta)$ such that
 \begin{equation}\label{eqlab}
    \lim_{R\to \infty}\sup_{y\in K}\sup_{\tau \le T} \ee^y \{\ind{\rho(y,X_{\tau})\geq R} |g(X_{\tau})| \}= 0.
    \end{equation}
\end{assumption}
Notice that \ref{ass: weaker gen} and \ref{ass:weak_feller} imply assumption \ref{ass:general}.
Indeed, for any $n$ and $R>0$ we have 
\begin{align*}
&\sup_{y\in K}\sup_{\tau \le T} \ee^y \{\ind{\rho(y,X_{\tau})\geq R} |g(X_{\tau})| \}\\
&\leq
\sup_{y\in K}\sup_{\tau \le T} \left[\ee^y \{\ind{|g(X_\tau)| > n} |g(X_{\tau})|\}+\ee^y \{\ind{\rho(y,X_{\tau})\geq R}\ind{|g(X_\tau)| \leq n} |g(X_{\tau})|\}\right]\\
&=a(n)+b(n,R).
\end{align*}
For a fixed $n$ we have that $b(n,R)\leq n  \sup_{y\in K}\prob^y \{\exists_{s\in [0,T]}\  \rho(x,X_s)\geq R\}\to 0$ as $R\to \infty$ by assumption \ref{ass:weak_feller} using Proposition 2.1 of \cite{Palczewski2008}. Assumption \ref{ass: weaker gen} implies $a(n)\to 0$.
\end{remark}

We will now prove that there exists an optimal stopping time for $w_T$. Before we formulate the main theorem, we state a simple lemma.
\begin{lemma}\label{lem:integrability_w_T}
\ref{ass:uniform_sup_integrability} implies that $w_{T-t}(X_t)$ is $\prob^x$ integrable.
\end{lemma}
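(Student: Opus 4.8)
I want to show $w_{T-t}(X_t)$ is $\prob^x$-integrable, meaning $\ee^x\{|w_{T-t}(X_t)|\} < \infty$. The natural approach is to bound $|w_{T-t}(y)|$ by a quantity whose expectation under $\prob^x$ I can control via assumption $(B)_T$.

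Let me think about what $w_s(y)$ looks like. We have
$$w_s(y) = \sup_{\tau \le s} \ee^y\Big\{\int_0^\tau f(X_u)du + g(X_\tau)\Big\}.$$
Since $f$ is bounded, $|\int_0^\tau f \, du| \le \|f\|_\infty \cdot s \le \|f\|_\infty T$. For the terminal term, taking $\tau = 0$ gives the lower bound $w_s(y) \ge g(y) - \|f\|_\infty T$, and for any admissible $\tau \le s \le T$ we have $\ee^y\{g(X_\tau)\} \le \ee^y\{\zeta_T\}$ (here $\zeta_T$ is under $\prob^y$). So roughly
$$|w_s(y)| \le \|f\|_\infty T + \ee^y\{\zeta_T\}.$$

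Now I need $\ee^x\{|w_{T-t}(X_t)|\}$ finite. By the Markov property, $\ee^x\{\ee^{X_t}\{\zeta_T\}\}$ should relate to a supremum of $|g|$ over a longer time window under $\prob^x$. Here is where I need to be careful: this strategy, taken globally, would require integrability of $\sup_{s\le t+T}|g(X_s)|$ under $\prob^x$, and assumption $(B)_T$ only gives uniform integrability over a compact ball $K$, not the whole space.

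**The main obstacle** will be handling the fact that $X_t$ ranges over all of $E$, whereas $(B)_T$ is a local (ball) condition. The cleanest fix is to observe that integrability is a weaker requirement than uniform integrability: for any single starting point $y$, uniform integrability of $\zeta_T$ over the ball $\bar B(y,\delta)$ in particular gives $\ee^y\{\zeta_T\} < \infty$, so $|w_s(y)| < \infty$ pointwise. To get $\prob^x$-integrability of the composite, I would split according to whether $X_t$ lies in the ball $K = \bar B(x,\delta)$ from $(B)_T$ applied at the point $x$. Actually, the most direct route is to bound $|w_{T-t}(X_t)| \le \|f\|_\infty T + \ee^{X_t}\{\zeta_T\}$ and then use the Markov property: conditioning on $F_t$, the inner expectation is a conditional expectation of $\sup_{u\in[t,t+T]}|g(X_u)|$, so that
$$\ee^x\{|w_{T-t}(X_t)|\} \le \|f\|_\infty T + \ee^x\Big\{\sup_{u\in[t,t+T]}|g(X_u)|\Big\}.$$
Thus it suffices to show the right-hand expectation is finite. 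For that I would dominate $\sup_{u\in[t,t+T]}|g(X_u)| \le \sup_{u\in[0,t+T]}|g(X_u)| = \zeta_{t+T}$ and invoke the uniform integrability of $\zeta_{t+T}$ (equivalently $(B)_{T'}$ for $T' = t+T$, which holds whenever $(B)_T$ is understood for all horizons, or one argues $\zeta_T$ integrability extends to any finite horizon), which gives in particular $\ee^x\{\zeta_{t+T}\} < \infty$. Since only the finiteness of a single expectation is needed at the fixed starting point $x$, the local nature of $(B)_T$ is not actually restrictive: applying it at $x$ itself yields $\ee^x\{\zeta_T\} < \infty$, and the Markov-property step reduces everything to expectations of $\zeta$ over a finite horizon from $x$. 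I expect the write-up to be short: establish the pointwise bound on $w_s$, apply the tower property, and read off finiteness from $(B)$.
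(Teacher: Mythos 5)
Your overall strategy is the paper's: bound $|w_s(y)|$ pointwise by $\|f\|_\infty T$ plus an expectation of $\zeta$, push the expectation through with the Markov property, and read off finiteness from $(B)_T$ at $x$ (the paper's entire proof is the single inequality $\ee^x\{|w_{T-t}(X_t)|\}\le \|f\|(T-t)+\ee^x\{\zeta_T\}$). However, there is a bookkeeping slip in your version that turns into a real gap. You bound $|w_{T-t}(X_t)|\le \|f\|_\infty T+\ee^{X_t}\{\zeta_T\}$, i.e.\ you use the horizon $T$ from the random starting point $X_t$, so after the Markov property you land on $\sup_{u\in[t,t+T]}|g(X_u)|\le\zeta_{t+T}$ and you need $\ee^x\{\zeta_{t+T}\}<\infty$. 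Assumption $(B)_T$ does not give this, and your fallback claim that ``$\zeta_T$ integrability extends to any finite horizon'' is not justified: $(B)_T$ yields local uniform integrability of $\zeta_T$, hence local boundedness of $y\mapsto\ee^y\{\zeta_T\}$, but writing $\ee^x\{\zeta_{2T}\}$ via $\ee^x\{\ee^{X_T}\{\zeta_T\}\}$ requires integrability of that function under $P_T(x,\cdot)$, which is not implied by a local condition. (The alternative reading, ``$(B)_T$ holds for all horizons,'' is an assumption the lemma does not make.)

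The fix is immediate and brings you back to the paper's bound: $w_{T-t}$ only involves stopping times $\tau\le T-t$, so the correct pointwise estimate is $|w_{T-t}(y)|\le \|f\|(T-t)+\ee^y\{\zeta_{T-t}\}$. The Markov property then gives $\ee^x\{\ee^{X_t}\{\zeta_{T-t}\}\}=\ee^x\{\sup_{u\in[t,T]}|g(X_u)|\}\le\ee^x\{\zeta_T\}$, which is finite by $(B)_T$ applied at the single point $x$ (uniform integrability over the ball $\bar B(x,\delta)$ in particular gives $\ee^x\{\zeta_T\}<\infty$). Your observation that only pointwise integrability at $x$ is needed, not the uniform statement, is correct and is the right way to see why the local nature of $(B)_T$ is harmless here.
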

\begin{proof}
\[
\ee^x\left\{|w_{T-t} (X_t)|\right\} \le \|f\| (T-t) + \ee^{x} \{ \zeta_{T} \}.
\]
\end{proof}

\begin{theorem}\label{thm:w_T_stopping}
Under  \ref{ass:weak_feller} and \ref{ass:uniform_sup_integrability}, the smallest optimal stopping time for $w_T(x)$ is given by
\[
\tau_T = \inf \{ s \le T:\ g(X_s) \ge w_{T-s} (X_s) \}.
\]
Moreover, the process $Z^T_t = \int_0^t f(X_s) ds + w_{T-t}(X_t)$ is a right-continuous supermartingale.
\end{theorem}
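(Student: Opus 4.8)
The plan is to establish three pieces separately and in order, using the first two as tools for the third: the supermartingale property of $Z^T$, its right-continuity, and finally the optimality and minimality of $\tau_T$. Throughout I would rely on the integrability of $w_{T-t}(X_t)$ (Lemma~\ref{lem:integrability_w_T}), so that all conditional expectations below are well defined, and on the joint continuity of $(t,x)\mapsto w_t(x)$ (Lemma~\ref{lem:continuity_unbounded}).

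First I would prove the supermartingale property via the dynamic programming principle. For $s\le t\le T$, the Markov property gives
\[
\ee^x\{Z^T_t\mid F_s\}=\int_0^s f(X_u)\,du+\ee^{X_s}\Big\{\int_0^{t-s}f(X_u)\,du+w_{T-t}(X_{t-s})\Big\}.
\]
The inner expectation is the payoff, for the horizon-$(T-s)$ problem started at $X_s$, of the admissible but generally suboptimal strategy ``run until $t-s$, then act optimally for the remaining horizon $T-t$''; it is therefore bounded above by $w_{T-s}(X_s)$, which yields $\ee^x\{Z^T_t\mid F_s\}\le Z^T_s$. Right-continuity of $Z^T$ is then immediate: $t\mapsto\int_0^t f(X_s)\,ds$ is continuous since $f$ is bounded, $X$ has right-continuous paths, and the joint continuity of $w$ makes $t\mapsto w_{T-t}(X_t)$ right-continuous along paths. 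Because $g-w\le 0$ everywhere and $t\mapsto g(X_t)-w_{T-t}(X_t)$ is right-continuous, $\tau_T$ is the first entrance of a right-continuous process into the closed set $\{g\ge w\}$, hence a stopping time.

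For optimality I would identify $Z^T$ as the Snell envelope of the reward $G_t:=\int_0^t f(X_s)\,ds+g(X_t)$ on $[0,T]$, i.e.\ the smallest right-continuous supermartingale dominating $G$. Domination is clear from $w\ge g$. Minimality follows because any right-continuous supermartingale $\tilde U\ge G$ satisfies, for every stopping time $\tau\in[t,T]$, $\tilde U_t\ge\ee^x\{\tilde U_\tau\mid F_t\}\ge\ee^x\{G_\tau\mid F_t\}$; taking the essential supremum over such $\tau$ and applying the Markov property identifies the right-hand side with $Z^T_t$, whence $\tilde U_t\ge Z^T_t$. Granting this, assumption \ref{ass:uniform_sup_integrability} (uniform integrability of $\zeta_T=\sup_{t\le T}|g(X_t)|$) makes the family $\{G_\tau:\tau\le T\}$ uniformly integrable, placing the Snell envelope in class~D, so the classical Snell-envelope theory applies: the first time the envelope meets the reward, which is exactly $\tau_T$, is the smallest optimal stopping time, and the stopped process $Z^T_{t\wedge\tau_T}$ is a uniformly integrable martingale. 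Combining this with the identity $Z^T_{\tau_T}=\int_0^{\tau_T}f(X_s)\,ds+g(X_{\tau_T})$ — valid because $g=w$ at the first entrance time by continuity — gives $\ee^x\{\int_0^{\tau_T}f+g(X_{\tau_T})\}=\ee^x\{Z^T_{\tau_T}\}=Z^T_0=w_T(x)$, so $\tau_T$ attains the supremum.

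The main obstacle is the optimality step, not the supermartingale property. Because $g$ is unbounded, the naive bound $\ee^x\{Z^T_\tau\}\le w_T(x)$ only yields the easy inequality; the reverse requires the stopped supermartingale to lose no mass, and it is precisely here that \ref{ass:uniform_sup_integrability} is indispensable, ruling out value leaking to infinity through the lower tail of $g$ and securing both the class~D property behind the Snell-envelope argument and the validity of optional sampling at $\tau_T$. A secondary care point is checking that the essential supremum defining the Snell envelope coincides pointwise with $w_{T-t}(X_t)$, which follows from the Markov property together with the continuity supplied by Lemma~\ref{lem:continuity_unbounded}.
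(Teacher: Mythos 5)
Your overall architecture (supermartingale property of $Z^T$, Snell-envelope identification, optimality of the first contact time) runs parallel to the paper's, but two steps that you delegate to ``classical theory'' are precisely where the work lies, and one of them is a genuine gap. The decisive one is the optimality of $\tau_T$ itself. For a right-continuous reward process of class D, Snell-envelope theory only guarantees that the times $\tau_T^\ve=\inf\{s\le T:\ g(X_s)+\ve\ge w_{T-s}(X_s)\}$ are $\ve$-optimal; the \emph{first} time the envelope meets the reward is optimal only if the reward is suitably regular from the left, e.g.\ upper semicontinuous in expectation along increasing sequences of stopping times. Nothing in \ref{ass:weak_feller} and \ref{ass:uniform_sup_integrability} hands you this for free: it has to be extracted from the quasi-left-continuity of the Feller process. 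That is exactly what the paper does: it takes $\tau_T^\ve\uparrow\tilde\tau$, uses quasi-left-continuity (via Dynkin) to get $\hat w^n_{T-\tau_T^\ve}(X_{\tau_T^\ve})\to\hat w^n_{T-\tilde\tau}(X_{\tilde\tau})$ almost surely, upgrades this to $w$ by the uniform convergence of Lemma \ref{lem:continuity_unbounded}, and only then concludes that $\tilde\tau=\tau_T$ is optimal. Your proof never mentions quasi-left-continuity, so the assertion that ``the first time the envelope meets the reward is the smallest optimal stopping time'' is unsupported as stated.

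The second soft spot is the supermartingale property. Your inequality $\ee^{X_s}\bigl\{\int_0^{t-s}f(X_u)\,du+w_{T-t}(X_{t-s})\bigr\}\le w_{T-s}(X_s)$ is the dynamic programming principle; justifying the concatenation ``wait until $t-s$, then act optimally'' requires a measurable selection of near-optimal stopping times for the subproblem, which is not free for unbounded $g$. The paper sidesteps this by quoting Fakeev for the truncated problems $\hat w^n$ (where the terminal reward is bounded) to get that $Z^{T,n}$ is a right-continuous supermartingale, and then transferring the property to $Z^T$ via the uniform convergence of Lemma \ref{lem:continuity_unbounded}; this truncation-and-limit device would also repair your argument, but as written the DPP is asserted rather than proved. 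Your remaining observations are correct: integrability via Lemma \ref{lem:integrability_w_T}, the identity $g(X_{\tau_T})= w_{T-\tau_T}(X_{\tau_T})$ at the entrance time of a closed set by right-continuity, and the class D property from \ref{ass:uniform_sup_integrability}.
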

\begin{proof}
Notice first that from \cite{fakeev1971} it follows that  $Z^{T,n}_t = \int_0^t f(X_s) ds + \hat{w}_{T-t}^n(X_t)$ is a right-continuous supermartingale and by Lemma \ref{lem:continuity_unbounded} and its proof $Z^T_t$ is also a right-continuous supermartingale. By \cite[Theorem 4]{fakeev1970}
\[
\tau_T^\ve = \inf \{ s \le T:\ g(X_s)+\ve \ge w_{T-s} (X_s) \}
\]
is an $\ve$-optimal stopping time and
\begin{equation}\label{eq:epsopt}
w_T(x)=  \ee^x\Big\{\int_0^{\tau_T^\ve} f(X_s)ds + w_{T-\tau_T^\ve}(X_{\tau_T^\ve}) \Big\}.
\end{equation}
Notice that $\tau_T^\ve \leq \tau_T$.
Letting $\ve \to 0$ we have that $\tau_T^\ve$ increases to $\tilde{\tau}$. By quasi left continuity of $(X_t)$, which follows from Theorem 3.13 of \cite{Dynkin1965}, for each positive integer $n$ we have
\[\hat{w}_{T-\tau^\ve}^n(X_{\tau_T^\ve}) \to \hat{w}_{T-\tilde{\tau}}^n(X_{\tilde{\tau}}), \qquad \text{$\prob^x$-a.e.}
\]
as $\ve \to 0$.
Using the arguments of Lemma \ref{lem:continuity_unbounded} we obtain
\[{w}_{T-\tau^\ve}(X_{\tau_T^\ve}) \to {w}_{T-\tilde{\tau}}(X_{\tilde{\tau}}), \qquad \text{$\prob^x$-a.e.} \]
Letting $\ve \to 0$ in \eqref{eq:epsopt} yields
\begin{equation}\label{eqn:12}
w_T(x)
=  \ee^x\left\{\int_0^{\tilde{\tau}} f(X_s)ds + w_{T-\tilde{\tau}}(X_{\tilde{\tau}}) \right\}
 \leq \ee^x\left\{\int_0^{\tilde{\tau}} f(X_s)ds + g_{T-\tilde{\tau}}(X_{\tilde{\tau}})\right\},
\end{equation}
where the inequality is because $\tilde\tau = \lim_{\ve \to 0} \tau^\ve_T$. Hence,  $\tilde{\tau}$ is an optimal stopping time. If $\prob^x \{ \tilde \tau < \tau_T \} > 0$, the inequality in \eqref{eqn:12} would be strict leading to a contradiction. Since $\tilde\tau \le \tau_T$, it follows that $\tilde\tau = \tau_T$, $\prob^x$-a.s.
\end{proof}

Assumption \ref{ass:uniform_sup_integrability} follows from a number of more explicit conditions. Define for any set $U \subseteq E$
\[
\gamma_T(x, U) = \prob^x \{ X_t \in U\ \forall\, t \in [0, T] \}
\]
and
\[
g^*(r) = \sup_{y \in B(0, r)} |g(y)|, \qquad \xi_T = \sup_{t \in [0,T]} \|X_t\|.
\]
\begin{assumption}
\item[$(B1)_T$] \label{ass:point_gamma_prime}
For every $x \in E$ there is a compact ball $K = \bar B(x, \delta)$ and a sequence of compact sets $(K_i)$ such that $K \subset K_1$, $K_i \subset int\, K_{i+1}$, $\bigcup_i K_i = E$ and
\[
\sum_{i=1}^\infty \sup_{y \in K} \Big( \gamma_T(y, K_i) - \gamma_T (y, K_{i-1}) \Big) \max_{y \in K_i} |g(y)| < \infty.
\]

\item[$(B2)_T$] \label{ass:point_gamma prime'}
For every $x \in E$ there is a compact ball $K = \bar B(x, \delta)$ and a sequence of compact sets $(K_i)$ such that $K \subset K_1$, $K_i \subset int\, K_{i+1}$, $\bigcup_i K_i = E$ and
\[
\sum_{i=1}^\infty  \max_{y \in K_{i+1}\setminus K_i} |g(y)| \sup_{y\in K}\prob^y \{ \exists_{t\in [0,T]}; X_t\in K_{i+1}\setminus K_i\} < \infty.
\]

\item[$(B3)_T$] \label{ass:point_max_xi_prime}
For every $x \in E$ there is a compact ball $K = \bar B(x, \delta)$ such that $g^*(\xi_T)$ is uniformly integrable with respect to $\prob^y$, $y \in K$, i.e.
\[
\lim_{N \to \infty} \sup_{y \in K} \ee^y\{ g^*(\xi_T) \ind{\xi_T > N} \} =0.
\]
\end{assumption}
\begin{lemma}
Any of the assumptions \ref{ass:point_gamma_prime}, \ref{ass:point_gamma prime'} or \ref{ass:point_max_xi_prime} is sufficient for \ref{ass:uniform_sup_integrability}.
\end{lemma}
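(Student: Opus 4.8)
The plan is to prove each of the three implications $(B1)_T\Rightarrow(B)_T$, $(B2)_T\Rightarrow(B)_T$, $(B3)_T\Rightarrow(B)_T$ by a single unifying device: in each case I would construct a nonnegative random variable $\eta$ that dominates $\zeta_T=\sup_{t\le T}|g(X_t)|$ pathwise and whose family $\{\eta:y\in K\}$ is uniformly integrable with respect to $\{\prob^y:y\in K\}$, and then appeal to the elementary fact that domination preserves uniform integrability. Indeed, if $0\le\zeta_T\le\eta$ then $\{\zeta_T>n\}\subseteq\{\eta>n\}$ and $\zeta_T\ind{\zeta_T>n}\le\eta\ind{\eta>n}$, whence $\sup_{y\in K}\ee^y\{\zeta_T\ind{\zeta_T>n}\}\le\sup_{y\in K}\ee^y\{\eta\ind{\eta>n}\}$, so it suffices to verify the defining limit of $(B)_T$ for $\eta$ in place of $\zeta_T$.

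For $(B1)_T$ and $(B2)_T$ the dominating variable is built from the exhausting family $(K_i)$. Setting $K_0:=\emptyset$ and $A_i:=\{\forall t\le T:\ X_t\in K_i\}\setminus\{\forall t\le T:\ X_t\in K_{i-1}\}$, the $A_i$ are disjoint, $\prob^y(A_i)=\gamma_T(y,K_i)-\gamma_T(y,K_{i-1})$, and on $A_i$ one has $\zeta_T\le m_i:=\max_{K_i}|g|$; hence $\zeta_T\le\eta:=\sum_i m_i\ind{A_i}$, provided the $A_i$ exhaust $\Omega$ up to a null set. For $(B2)_T$, writing $D_i:=\{\exists\,t\le T:\ X_t\in K_{i+1}\setminus K_i\}$ and $m_i':=\max_{K_{i+1}\setminus K_i}|g|$, the partition of $E$ into $K_1$ and the annuli $K_{i+1}\setminus K_i$ yields the pathwise bound $\zeta_T\le\eta:=\max_{K_1}|g|+\sum_i m_i'\ind{D_i}$. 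In both cases the hypothesis is exactly the statement that $\sup_{y\in K}\ee^y\{\eta\}$ is bounded by a convergent series.

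The nontrivial point is that a uniform $L^1$ bound on $\{\eta\}$ is weaker than uniform integrability, so I would split $\eta=\eta_M+r_M$ into a partial sum and a tail. The partial sum $\eta_M$ is bounded by a constant $C_M$ (by disjointness of the $A_i$ in the first case, by the deterministic tail of the series in the second), so $\{\eta_M\}$ is trivially uniformly integrable, while $\sup_{y\in K}\ee^y\{r_M\}$ is dominated by the tail of the convergent series and tends to $0$ as $M\to\infty$. Estimating $\ee^y\{\eta\ind{\eta>n}\}\le C_M\,\prob^y(\eta>n)+\ee^y\{r_M\}\le C_M\,S/n+\sup_{y\in K}\ee^y\{r_M\}$, with $S:=\sup_{y\in K}\ee^y\{\eta\}<\infty$, and letting first $n\to\infty$ and then $M\to\infty$ gives $\lim_n\sup_{y\in K}\ee^y\{\eta\ind{\eta>n}\}=0$. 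The exhaustion $\bigsqcup_i A_i=\Omega$ (a.s.) needed in the $(B1)_T$ case I would obtain from the relative compactness of the range of a right-continuous path with left limits on the compact interval $[0,T]$: such a range is bounded and its closure is compact, hence contained in some $K_i$.

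For $(B3)_T$ the dominating variable is the monotone envelope: since $\|X_t\|\le\xi_T$ for every $t\le T$, continuity of $g$ gives $\zeta_T\le g^*(\xi_T)$, and because $g^*$ is nondecreasing and finite-valued we have $\{g^*(\xi_T)>n\}\subseteq\{\xi_T>s_n\}$ with $s_n\to\infty$; thus $\sup_{y\in K}\ee^y\{\zeta_T\ind{\zeta_T>n}\}\le\sup_{y\in K}\ee^y\{g^*(\xi_T)\ind{\xi_T>s_n}\}\to0$ directly by hypothesis. I expect the routine parts to be the domination lemma and the envelope inequalities; the main obstacle is the one analytic step common to $(B1)_T$ and $(B2)_T$ of promoting the uniform $L^1$-bound furnished by the convergent series to genuine uniform integrability (handled by the truncation argument above), together with the measure-theoretic verification that the shell decomposition exhausts $\Omega$ almost surely.
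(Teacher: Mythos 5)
Your proposal is correct and follows essentially the same route as the paper: dominate $\zeta_T$ by the shell decomposition built from $(K_i)$ for \ref{ass:point_gamma_prime} and \ref{ass:point_gamma prime'}, and by the monotone envelope $g^*(\xi_T)$ for \ref{ass:point_max_xi_prime}, then let the convergent series from the hypothesis control the tail. The only cosmetic difference is that you promote the uniform $L^1$ bound to uniform integrability via an explicit truncation $\eta=\eta_M+r_M$, whereas the paper keeps the deterministic indicator $\ind{\max_{K_i}|g|>n}$ inside the series and applies dominated convergence for series directly; your added justification that the events $\{\forall t\le T:\ X_t\in K_i\}$ exhaust $\Omega$ a.s.\ is a detail the paper leaves implicit.
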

\begin{proof}
Assumption \ref{ass:point_gamma_prime}: Notice that
\begin{align*}
\zeta_T
&\le \sum_{i=1}^\infty \ind{\forall_{t \in [0, T]} X_t \in K_i \text{\ and\ } \exists_{t \in [0, T]} X_t \notin K_{i-1}} \max_{y \in K_i} |g(y)|\\
&= \sum_{i=1}^\infty \big(\ind{\forall_{t \in [0, T]} X_t \in K_i} - \ind{\forall_{t \in [0, T]} X_t \in K_{i-1}}\big) \max_{y \in K_i} |g(y)|.
\end{align*}
Hence, for any $x \in K$
\begin{align*}
\ee^x \{ \zeta_T \ind{\zeta_T > n} \}
&\le \sum_{i=1}^\infty \big(\gamma_T(x, K_i) - \gamma_T(x, K_{i-1})\big) \ind{\max_{y \in K_i} |g(y)| > n} \max_{y \in K_i} |g(y)|\\
&\le \sum_{i=1}^\infty \sup_{y \in K} \big(\gamma_T(y, K_i) - \gamma_T(y, K_{i-1})\big) \ind{\max_{y \in K_i} |g(y)| > n} \max_{y \in K_i} |g(y)|\\
&\to 0 \qquad \text{as $n \to \infty$.}
\end{align*}

Assumption \ref{ass:point_gamma prime'}: For any $x \in K$,
\begin{align*}
\ee^x \{ \zeta_T \ind{\zeta_T > n} \}
&\le \sum_{i=1}^\infty  \ind{\max_{y \in K_{i+1}\setminus K_i} |g(y)| > n} \max_{y \in K_{i+1}\setminus K_i} |g(y)| \prob^x \{ \exists_{t\in [0,T]}; X_t\in K_{i+1}\setminus K_i\}\\
&\le \sum_{i=1}^\infty  \ind{\max_{y \in K_{i+1}\setminus K_i} |g(y)| > n} \max_{y \in K_{i+1}\setminus K_i} |g(y)| \sup_{y \in K} \prob^y \{ \exists_{t\in [0,T]}; X_t\in K_{i+1}\setminus K_i\}\\
&\to 0 \qquad \text{as $n \to \infty$.}
\end{align*}

Assumption \ref{ass:point_max_xi_prime}: It suffices to notice that $\zeta_T \le g^*(\xi_T)$.

\end{proof}

\section{Optimal stopping on infinite interval with unbounded terminal reward}\label{sec:unbounded}
More generally, we consider a stopping problem
\begin{equation}\label{eqn:infinite_stopping}
w(x)=\sup_\tau \liminf_{T \to \infty} \ee^x\left\{\int_0^{\tau \wedge T} f(X_s)ds + g(X_{\tau \wedge T})\right\},
\end{equation}
where $f$ is a continuous bounded function satisfying $\mu(f) := \int_E f(x) \mu(dx) <0$ (recall that $\mu$ is the invariant measure of $(X_t)$), and $g$ is an unbounded continuous function that satisfies
\begin{assumption}
\item[(C1)]\label{ass:upper_bound_g} Random variable $\zeta^+ := \sup_{t \ge 0} g^+(X_t)$ is integrable with respect to $\prob^x$ for any $x$.
\end{assumption}
In the supremum above, we allow for stopping times taking the value infinity. Notice, however, that \eqref{eqn:infinite_stopping} is equivalent to
\begin{equation}\label{eqn:infinite_stopping_1}
w(x)=\sup_{\text{$\tau$-bounded}} \ee^x\left\{\int_0^{\tau} f(X_s)ds + g(X_{\tau})\right\},
\end{equation}
since for any stopping time $\tau$ we have
\[
\liminf_{T \to \infty} \ee^x\left\{\int_0^{\tau \wedge T} f(X_s)ds + g(X_{\tau \wedge T})\right\} \le \sup_{\text{$\sigma$-bounded}} \ee^x\left\{\int_0^{\sigma} f(X_s)ds + g(X_{\sigma})\right\}.
\]
Similar arguments show that
\begin{equation}\label{eqn:infinite_stopping_2}
w(x)=\sup_\tau \limsup_{T \to \infty} \ee^x\left\{\int_0^{\tau \wedge T} f(X_s)ds + g(X_{\tau \wedge T})\right\}.
\end{equation}
We use the formulation in \eqref{eqn:infinite_stopping} instead of \eqref{eqn:infinite_stopping_1} as it allows for a simple description of $\ve$-optimal and optimal stopping times as hitting times of a compact set, in line with the classical theory of optimal stopping.

Two main results in this section are Theorem \ref{thm:tau_star} which shows the form of optimal stopping times for $w(x)$ and Theorem \ref{prop:continuous_w1} in which the continuity of $w$ is established. We also state conditions under which $\liminf$ in \eqref{eqn:infinite_stopping} can be omitted.

\begin{remark}
Assumption \ref{ass:upper_bound_g} is  weaker than often made in the optimal stopping literature, where one requires $\ee^x \{ \sup_{t \ge 0} |g(X_t)| \} < \infty$, c.f. \cite[Section 1.2]{Peskir2006}.
\end{remark}

 The following assumption allows us to omit the limit in \eqref{eqn:infinite_stopping} for integrable stopping times:
\begin{assumption}
\item[(C2)]\label{ass:g_vanish} For any sequence of events $A_T \in \ef_T$, $T > 0$,
\[
\lim_{T \to \infty} \prob^x(A_T) = 0 \quad \Longrightarrow \quad \lim_{T \to \infty} \ee^x \{ \ind{A_T} g^-(X_T) \} = 0.
\]
\end{assumption}
\begin{lemma}\label{lem:integrable_equiv}
Under assumption \ref{ass:g_vanish}, for any integrable stopping time $\tau$ the following equality holds
\[
\lim_{T \to \infty} \ee^x\left\{\int_0^{\tau \wedge T} f(X_s)ds + g(X_{\tau \wedge T})\right\}
=
\ee^x\left\{\int_0^{\tau} f(X_s)ds + g(X_{\tau})\right\}.
\]
\end{lemma}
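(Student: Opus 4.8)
The plan is to split the functional into its integral part and its terminal part and to treat each separately, the whole difficulty being concentrated in the negative part of the terminal reward; the key observation is that on $\{\tau \le T\}$ we have $\tau \wedge T = \tau$ whereas on $\{\tau > T\}$ we have $\tau \wedge T = T$, and it is this decomposition of the terminal term along the event $\{\tau > T\}$ that makes assumption \ref{ass:g_vanish} directly applicable. First I would dispose of the integral term: since $f$ is bounded and $\tau$ is integrable (hence $\tau < \infty$ $\prob^x$-a.s.), the estimate $\big| \int_0^{\tau \wedge T} f(X_s)\,ds \big| \le \|f\|\,\tau$ furnishes an integrable majorant independent of $T$, while $\int_0^{\tau \wedge T} f(X_s)\,ds \to \int_0^{\tau} f(X_s)\,ds$ pointwise as $T \to \infty$. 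Dominated convergence then gives $\ee^x\{\int_0^{\tau \wedge T} f(X_s)\,ds\} \to \ee^x\{\int_0^{\tau} f(X_s)\,ds\}$, so it remains only to show that $\ee^x\{ g(X_{\tau \wedge T}) \} \to \ee^x\{ g(X_{\tau}) \}$.

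For this I would write $g = g^+ - g^-$ and handle the two parts by different arguments. The positive part is the easy one: $g^+(X_{\tau \wedge T}) \le \zeta^+$, which is integrable by \ref{ass:upper_bound_g}, and $g^+(X_{\tau \wedge T}) \to g^+(X_{\tau})$ $\prob^x$-a.s., so dominated convergence yields $\ee^x\{ g^+(X_{\tau \wedge T}) \} \to \ee^x\{ g^+(X_{\tau}) \}$, a \emph{finite} limit bounded by $\ee^x\{\zeta^+\}$.

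The negative part is the crux. Here I would use the decomposition
\[
\ee^x\{ g^-(X_{\tau \wedge T}) \} = \ee^x\{ g^-(X_{\tau})\ind{\tau \le T} \} + \ee^x\{ \ind{\tau > T}\, g^-(X_T) \}.
\]
In the first summand $\ind{\tau \le T} \uparrow 1$ $\prob^x$-a.s.\ as $T \to \infty$ (again because $\tau < \infty$ a.s.), so monotone convergence gives $\ee^x\{ g^-(X_{\tau})\ind{\tau \le T} \} \uparrow \ee^x\{ g^-(X_{\tau}) \} \in [0, \infty]$. The second summand is exactly where assumption \ref{ass:g_vanish} is needed: taking $A_T = \{\tau > T\} \in \ef_T$, integrability of $\tau$ forces $\prob^x(A_T) \to 0$, whence \ref{ass:g_vanish} yields $\ee^x\{ \ind{A_T}\, g^-(X_T) \} \to 0$. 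Combining the two summands gives $\ee^x\{ g^-(X_{\tau \wedge T}) \} \to \ee^x\{ g^-(X_{\tau}) \}$.

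Finally I would recombine the two parts, taking care with extended-real arithmetic. Because the positive part always has finite expectation, $\ee^x\{ g(X_{\tau \wedge T}) \}$ is well defined in $[-\infty, \infty)$, and the two convergences may be subtracted: if $\ee^x\{ g^-(X_{\tau}) \} < \infty$ both limits are finite and combine to $\ee^x\{ g(X_{\tau}) \}$, while if $\ee^x\{ g^-(X_{\tau}) \} = \infty$ then $\ee^x\{ g(X_{\tau \wedge T}) \} \to -\infty = \ee^x\{ g(X_{\tau}) \}$. The only genuinely delicate step is controlling the tail term $\ee^x\{ \ind{\tau > T}\, g^-(X_T) \}$, which is precisely the phenomenon assumption \ref{ass:g_vanish} is tailored to suppress; every other step is a routine application of dominated or monotone convergence.
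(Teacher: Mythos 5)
Your proof is correct and follows essentially the same route as the paper: both split off the (easy) integral term, dominate $g^+(X_{\tau\wedge T})$ by the integrable $\zeta^+$ from \ref{ass:upper_bound_g}, and apply \ref{ass:g_vanish} to the tail event $A_T=\{\tau>T\}\in\ef_T$ whose probability vanishes by integrability of $\tau$. The only (minor) difference is that you obtain two-sided convergence of $\ee^x\{g^-(X_{\tau\wedge T})\}$ directly from the exact decomposition $\ind{\tau\le T}g^-(X_\tau)+\ind{\tau>T}g^-(X_T)$ and monotone convergence, whereas the paper proves the $\liminf$ inequality this way and gets the $\limsup$ inequality from reverse Fatou with the bound $\zeta^+$; your variant has the small advantage of handling the case $\ee^x\{g^-(X_\tau)\}=\infty$ explicitly.
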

\begin{proof}
Let $\ee^x \{\tau \} < \infty$. Due to the boundedness of $f$, the convergence of the integral terms is obvious. For the terminal reward, we have
\begin{align*}
\ee^x \{ g(X_\tau) - g(X_{\tau \wedge T}) \}
&= \ee^x \{ g^+(X_\tau) - g^+(X_{\tau \wedge T}) \} + \ee^x \{ g^-(X_\tau) - g^-(X_{\tau \wedge T}) \} \\
&\le \ee^x \{ g^+(X_\tau) - g^+(X_{\tau \wedge T}) \} + 0 - \ee^x \{ \ind{\tau > T} g^-(X_T) \}.
\end{align*}
The first term converges to $0$ as $T \to \infty$ by dominated convergence theorem. The last term vanishes by assumption \ref{ass:g_vanish}. This proves  $\liminf_{T \to \infty} \ee^x \{ g(X_{\tau \wedge T}) \} \ge \ee^x \{ g(X_\tau)\}$. The opposite inequality follows by Fatou's lemma (recall that $g(X_t)$ is bounded from above by an integrable random variable $\zeta^+$): \(\limsup_{T \to \infty} \ee^x \{ g(X_{\tau \wedge T}) \} \allowbreak \le \ee^x \{ g(X_\tau)\}\).
\end{proof}
\begin{remark}
Without assumption \ref{ass:g_vanish}, if a stopping time $\tau$ has a finite expectation then we can only show that
\begin{equation}\label{eqn:integrable_ineq}
\begin{aligned}
&\liminf_{T \to \infty} \ee^x\left\{\int_0^{\tau \wedge T} f(X_s)ds + g(X_{\tau \wedge T})\right\}\\
&\le
\limsup_{T \to \infty} \ee^x\left\{\int_0^{\tau \wedge T} f(X_s)ds + g(X_{\tau \wedge T})\right\}
\le
\ee^x\left\{\int_0^{\tau} f(X_s)ds + g(X_{\tau})\right\},
\end{aligned}
\end{equation}
by applying Fatou's lemma to the convergence of $g(X_{\tau \wedge T})$ to $g(X_\tau)$.
\end{remark}

The convergence in Lemma \ref{lem:integrable_equiv} can be made uniform over compact sets under a uniform version of assumptions \ref{ass:upper_bound_g} and \ref{ass:g_vanish}:
\begin{assumption}
\item[(C1')]\label{ass:unif_upper_bound_g} Random variable $\zeta^+ := \sup_{t \ge 0} g^+(X_t)$ is uniformly integrable with respect $\prob^x$ for $x$ from compact sets.
\item[(C2')]\label{ass:g_vanish_prime} For any compact set $\Gamma \subset E$ and a sequence of events $A_T \in \ef_T$, $T > 0$,
\[
\lim_{T \to \infty} \sup_{x \in \Gamma} \prob^x(A_T) = 0 \quad \Longrightarrow \quad \lim_{T \to \infty} \sup_{x \in \Gamma} \ee^x \{ \ind{A_T} |g(X_T)| \} = 0.
\]
\end{assumption}
\begin{cor}\label{cor:uniintegrable equiv}
Under \ref{ass:unif_upper_bound_g} and \ref{ass:g_vanish_prime}, if $x \mapsto \ee^x\left\{\tau\right\}$ is bounded on a compact set $\Gamma$, the convergence of $\ee^x\left\{\int_0^{\tau \wedge T} f(X_s)ds + g(X_{\tau \wedge T})\right\}$ to $\ee^x\left\{\int_0^{\tau} f(X_s)ds + g(X_{\tau})\right\}$ is uniform on $\Gamma$.
\end{cor}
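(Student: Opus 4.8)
The plan is to prove a uniform version of Lemma~\ref{lem:integrable_equiv} by revisiting its proof and upgrading each pointwise limit to one that is uniform over $\Gamma$. Write $M = \sup_{x\in\Gamma}\ee^x\{\tau\} < \infty$ and consider
\[
D_T(x) = \ee^x\Big\{\int_0^{\tau} f(X_s)\,ds + g(X_{\tau})\Big\} - \ee^x\Big\{\int_0^{\tau\wedge T} f(X_s)\,ds + g(X_{\tau\wedge T})\Big\};
\]
the goal is $\sup_{x\in\Gamma}|D_T(x)| \to 0$. The single fact driving every estimate is that, by Markov's inequality, $\sup_{x\in\Gamma}\prob^x(\tau > T) \le M/T \to 0$; this is exactly where boundedness of $x\mapsto\ee^x\{\tau\}$ enters, and it lets me feed the events $A_T = \{\tau > T\}\in\ef_T$ (note $\{\tau\le T\}\in\ef_T$ since $\tau$ is a stopping time, with $\sup_{x\in\Gamma}\prob^x(A_T)\to0$) into the uniform assumptions \ref{ass:unif_upper_bound_g} and \ref{ass:g_vanish_prime}.

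First I would split $D_T$ into the integral part $I_T(x) = \ee^x\{\int_{\tau\wedge T}^{\tau} f(X_s)\,ds\}$ and the terminal part $J_T(x)$. Since $f$ is bounded, $|I_T(x)| \le \|f\|\,\ee^x\{(\tau - T)^+\}$, so the pointwise dominated-convergence step (with dominating variable $\tau$) must now be made uniform; this amounts to uniform integrability of $\tau$ over $\Gamma$, which I would extract from the bounded-mean hypothesis. For the terminal part I would use that on $\{\tau\le T\}$ one has $X_{\tau\wedge T}=X_\tau$, whence $J_T(x) = \ee^x\{\ind{\tau>T}\big(g(X_\tau)-g(X_T)\big)\}$ and therefore
\[
|J_T(x)| \le \ee^x\{\ind{\tau>T}\,g^+(X_\tau)\} + \ee^x\{\ind{\tau>T}\,g^-(X_\tau)\} + \ee^x\{\ind{\tau>T}\,|g(X_T)|\}.
\]
The last summand is controlled directly by \ref{ass:g_vanish_prime} applied to $A_T=\{\tau>T\}$, giving $\sup_{x\in\Gamma}\ee^x\{\ind{\tau>T}|g(X_T)|\}\to0$. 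The first summand is handled by \ref{ass:unif_upper_bound_g}: since $g^+(X_\tau)\le\zeta^+$ and $\zeta^+$ is uniformly integrable over $\Gamma$, the vanishing of $\sup_{x\in\Gamma}\prob^x(\tau>T)$ forces $\sup_{x\in\Gamma}\ee^x\{\ind{\tau>T}g^+(X_\tau)\}\to0$ (uniform integrability plus vanishing probability of the restricting event).

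The main obstacle is the remaining lower-tail term $\ee^x\{\ind{\tau>T}\,g^-(X_\tau)\}$, evaluated at the \emph{random} time $\tau$ rather than at the deterministic time $T$ to which \ref{ass:g_vanish_prime} applies, together with the uniform integrability of $\tau$ needed for $I_T$. These are precisely the two places where the pointwise dominated convergence of Lemma~\ref{lem:integrable_equiv} has to be promoted to uniformity over $\Gamma$, and where the interplay of the bounded-mean hypothesis with \ref{ass:unif_upper_bound_g}--\ref{ass:g_vanish_prime} is genuinely used; I would address them by deriving, from $\sup_{x\in\Gamma}\ee^x\{\tau\}<\infty$ and the compactness of $\Gamma$, the uniform integrability of the families $\{\tau\}$ and $\{g^-(X_\tau)\}$ under $\{\prob^x\}_{x\in\Gamma}$, after which the two tails vanish uniformly as $T\to\infty$ and the proof concludes.
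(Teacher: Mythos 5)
The paper states this corollary without proof, so the only basis for comparison is the proof of Lemma \ref{lem:integrable_equiv}, of which the corollary is meant to be a uniform upgrade; your decomposition into the integral term, the term $\ee^x\{\ind{\tau>T}|g(X_T)|\}$ (handled by \ref{ass:g_vanish_prime} via $A_T=\{\tau>T\}$ and Markov's inequality) and the term $\ee^x\{\ind{\tau>T}g^+(X_\tau)\}$ (handled by \ref{ass:unif_upper_bound_g}) is exactly the right skeleton and those two terms are dealt with correctly. The problem is that the two terms you yourself single out as ``the main obstacle'' are not actually resolved: your closing sentence asserts that the uniform integrability of $\{\tau\}$ and of $\{g^-(X_\tau)\}$ over $\Gamma$ can be ``derived from $\sup_{x\in\Gamma}\ee^x\{\tau\}<\infty$ and the compactness of $\Gamma$,'' but this is false as a general implication --- a family of nonnegative random variables with uniformly bounded first moments need not be uniformly integrable (take $\tau=n$ with probability $1/n$ under $\prob^{x_n}$), and compactness of $\Gamma$ contributes nothing without some continuity in $x$ of the laws involved. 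For $g^-(X_\tau)$ the situation is worse still: the hypotheses bound $\ee^x\{\tau\}$, not $\ee^x\{g^-(X_\tau)\}$, and \ref{ass:g_vanish_prime} controls $g$ only at the deterministic time $T$, so there is no route from the stated assumptions to uniform (or even pointwise uniform-in-$x$) smallness of $\ee^x\{\ind{\tau>T}g^-(X_\tau)\}$. So the proposal has a genuine gap at both flagged points.

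It is worth noting how the paper's own pointwise argument sidesteps the second difficulty: in the proof of Lemma \ref{lem:integrable_equiv} the quantity $\ind{\tau>T}g^-(X_\tau)$ enters the estimate of $\ee^x\{g(X_\tau)-g(X_{\tau\wedge T})\}$ with a favourable sign and is simply discarded, leaving only $\ind{\tau>T}g^-(X_T)$, which \ref{ass:g_vanish_prime} does control; the reverse inequality is obtained by Fatou's lemma using the integrable upper bound $\zeta^+$. Mimicking this gives a \emph{one-sided} uniform estimate $\sup_{x\in\Gamma}\big(\ee^x\{\int_0^\tau f\,ds+g(X_\tau)\}-\ee^x\{\int_0^{\tau\wedge T}f\,ds+g(X_{\tau\wedge T})\}\big)\le\epsilon_T\to0$ modulo the integral term, and this one-sided bound is all that the downstream application (Proposition \ref{prop:continuous_w}, where $v_T\le w$ is known a priori) actually requires. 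By formulating the goal symmetrically as $\sup_{x\in\Gamma}|D_T(x)|\to0$ you have committed yourself to controlling $\ee^x\{\ind{\tau>T}g^-(X_\tau)\}$, which the assumptions do not permit. Even on the one-sided route, the integral term $\|f\|\,\ee^x\{(\tau-T)^+\}$ still requires uniform integrability of $\tau$ over $\Gamma$ rather than just bounded means, so an honest proof must either add that as a hypothesis or exploit additional structure of $\tau$ (e.g.\ that it is a hitting time, via the strong Markov property); your proposal does neither.
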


For the reminder of this section we make the following standing assumptions: \ref{ass:weak_feller}, \ref{ass:speed_ergodic}, \ref{ass:uniform_sup_integrability} for $T > 0$, \ref{ass:upper_bound_g} and \ref{ass:g_vanish}. We also assume:
\begin{assumption}
\item[(C3)]\label{ass:undis_large_dev} For any $x \in E$, there is $d(x) < 0$ such that
\[
\gamma (x) = \sup_{\tau} \liminf_{T \to \infty} \ee^x \Big\{ \int_0^{\tau \wedge T} \big(f(X_s) - d(x)\big) ds \Big\} < \infty.
\]
\end{assumption}
This will ensure that the value function $w(x)$ is finite and allow us to prove that $\ve$-optimal stopping times for \eqref{eqn:infinite_stopping} are integrable. Interested reader is referred to \cite[Sections 2.2-2.3]{palczewski2014} for a thorough discussion of sufficient conditions for \ref{ass:undis_large_dev}. Here we only mention a condition that links $d(x)$ with $\mu(f)$.
\begin{lemma}
A sufficient condition for \ref{ass:undis_large_dev} can be formulated as
\begin{assumption}
\item[(S)]\label{ass:suffundis} $\mu(f)<0$ and for some $\delta\in (0,1]$
\[\bar{\gamma}(x)=\sup_{\tau} \liminf_{T \to \infty} \ee^x \Big\{(1-\delta)\mu(f)(\tau \wedge T)-q(X_{\tau \wedge T}) \Big\} < \infty,
\]
\end{assumption}
where $q$ is defined in Section \ref{sec:prem}. Then \ref{ass:undis_large_dev} holds with $d(x)=\delta \mu(f)$.
\end{lemma}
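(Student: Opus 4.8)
The plan is to reduce assumption \ref{ass:undis_large_dev} to \ref{ass:suffundis} by means of the occupation-time identity for the centred zero-potential $q$ supplied by Lemma \ref{lq}. The key observation is that the quantity controlled in \ref{ass:suffundis} is, up to the additive constant $q(x)$, exactly the quantity that must be shown to be finite in \ref{ass:undis_large_dev} once one takes $d(x)=\delta\mu(f)$. Since $\mu(f)<0$ and $\delta\in(0,1]$, this choice automatically satisfies $d(x)<0$, so the only thing requiring work is the finiteness statement.

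Concretely, I would fix $x$ and an arbitrary stopping time $\tau$, and apply \eqref{eq0potent} to the \emph{bounded} stopping time $\tau\wedge T$ (bounded since $\tau\wedge T\le T$), which is precisely what makes Lemma \ref{lq} applicable. This yields
\[
q(x)=\ee^x\Big\{\int_0^{\tau\wedge T}\big(f(X_t)-\mu(f)\big)dt + q(X_{\tau\wedge T})\Big\},
\]
and, because $f$ is bounded and $\tau\wedge T\le T$, all terms are finite, so it may be rearranged as
\[
\ee^x\Big\{\int_0^{\tau\wedge T} f(X_t)dt\Big\}=q(x)+\mu(f)\,\ee^x\{\tau\wedge T\}-\ee^x\{q(X_{\tau\wedge T})\}.
\]
Substituting this into the integrand of \ref{ass:undis_large_dev} with $d(x)=\delta\mu(f)$, the two occupation-time terms combine via $\mu(f)-\delta\mu(f)=(1-\delta)\mu(f)$, giving the exact identity
\[
\ee^x\Big\{\int_0^{\tau\wedge T}\big(f(X_s)-\delta\mu(f)\big)ds\Big\}
= q(x)+\ee^x\big\{(1-\delta)\mu(f)(\tau\wedge T)-q(X_{\tau\wedge T})\big\}.
\]

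To finish, I would observe that $q(x)$ is a finite constant (Lemma \ref{lq} guarantees $q$ is continuous, hence finite-valued) depending on neither $\tau$ nor $T$, so it passes through both the $\liminf_{T\to\infty}$ and the $\sup_\tau$. Taking $\liminf$ in $T$ and then supremum over $\tau$ on both sides of the last identity gives $\gamma(x)=q(x)+\bar\gamma(x)$, which is finite by \ref{ass:suffundis}, establishing \ref{ass:undis_large_dev} with $d(x)=\delta\mu(f)<0$. I do not anticipate a genuine obstacle: the only points to be careful about are that $\tau\wedge T$ is bounded (so that Lemma \ref{lq} applies) and that all expectations in the rearrangement are finite (so the algebra is legitimate); once the occupation-time identity is in place, the result is a direct rewriting.
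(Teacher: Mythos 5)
Your proof is correct and follows essentially the same route as the paper: both apply the zero-potential identity \eqref{eq0potent} of Lemma \ref{lq} to the bounded stopping time $\tau\wedge T$ and rearrange to obtain $\ee^x\{\int_0^{\tau\wedge T}(f(X_s)-\delta\mu(f))ds\}=q(x)+\ee^x\{(1-\delta)\mu(f)(\tau\wedge T)-q(X_{\tau\wedge T})\}$, whence $\gamma(x)=q(x)+\bar\gamma(x)<\infty$. The paper merely states the key identity for a generic bounded stopping time and leaves the passage to $\liminf_T$ and $\sup_\tau$ implicit, which you spell out.
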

\begin{proof}
It is sufficient to notice that using Lemma \ref{lq} for any bounded stopping time $\sigma$ we have
\begin{align*}
\ee^x \Big\{(1-\delta)\mu(f)\sigma-q(X_{\sigma}) \Big\}
&=
\ee^x \Big\{(1-\delta)\mu(f)\sigma + \int_0^\sigma \big(f(X_s) - \mu(f)\big) ds \Big\} - q(x)\\
&=
\ee^x \Big\{\int_0^\sigma \big(f(X_s) - \delta \mu(f)\big) ds \Big\} - q(x).
\end{align*}
\end{proof}
\begin{remark}\label{rem:suffundis}
Notice that assumption \ref{ass:suffundis} is satisfied, in particular, when the negative part of $q$ is bounded.
\end{remark}

Now we prove that the value function $w$ takes finite values and is lower-semi\-con\-ti\-nu\-ous which will allow us to define candidates for $\ve$-optimal and optimal stopping times as hitting times of closed sets.
\begin{lemma}\label{lem:lsc_w}
Function $w$ is finite and lower semi-continuous.
\end{lemma}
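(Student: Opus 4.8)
The plan is to establish the two assertions separately: finiteness follows from combining the running-reward control \ref{ass:undis_large_dev} with the upper envelope \ref{ass:upper_bound_g}, while lower semi-continuity follows from representing $w$ as a supremum of the continuous finite-horizon value functions $w_T$ of Section \ref{sec:stopping_T}.

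For finiteness, the lower bound $w(x)\ge g(x)>-\infty$ is immediate on taking $\tau\equiv 0$. For the upper bound I fix a stopping time $\tau$ and recentre the running reward at the level $d(x)<0$ supplied by \ref{ass:undis_large_dev}, writing $\int_0^{\tau\wedge T} f(X_s)\,ds=\int_0^{\tau\wedge T}\big(f(X_s)-d(x)\big)\,ds+d(x)(\tau\wedge T)$. Since $d(x)<0$ we have $d(x)\,\ee^x\{\tau\wedge T\}\le 0$, and since $g(X_{\tau\wedge T})\le g^+(X_{\tau\wedge T})\le\zeta^+$ we get
\[
\ee^x\Big\{\int_0^{\tau\wedge T} f(X_s)\,ds+g(X_{\tau\wedge T})\Big\}\le \ee^x\Big\{\int_0^{\tau\wedge T}\big(f(X_s)-d(x)\big)\,ds\Big\}+\ee^x\{\zeta^+\}.
\]
Taking $\liminf_{T\to\infty}$ (the additive constant $\ee^x\{\zeta^+\}$ passes through, and the first term is bounded by $\gamma(x)$ for the given $\tau$) and then the supremum over $\tau$ yields $w(x)\le\gamma(x)+\ee^x\{\zeta^+\}<\infty$, the finiteness being guaranteed by \ref{ass:undis_large_dev} and \ref{ass:upper_bound_g}. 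The upper envelope $\zeta^+\ge g^+$ also ensures the whole functional is a well-defined element of $[-\infty,\infty)$, so no $\infty-\infty$ issue arises.

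For lower semi-continuity the key step is the reduction $w=\sup_{T\ge 0} w_T$. Indeed, \eqref{eqn:infinite_stopping_1} writes $w$ as a supremum over all bounded stopping times, and because every bounded $\tau$ satisfies $\tau\le T$ for some $T$, this supremum decomposes as the iterated supremum $\sup_T\sup_{\tau\le T}=\sup_T w_T$. Under the standing assumptions \ref{ass:weak_feller} and \ref{ass:uniform_sup_integrability}, Lemma \ref{lem:continuity_unbounded} shows that each $w_T$ is continuous, and a pointwise supremum of continuous functions is lower semi-continuous; hence $w$ is lower semi-continuous. I expect no serious obstacle here: the argument is purely a matter of recognising the correct iterated-supremum structure and invoking the finite-horizon continuity already established, so lower (as opposed to full) semi-continuity comes essentially for free, without any tightness or compactness input. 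The only point requiring a little care is the $\liminf$ bookkeeping in the finiteness step, where one works with the inequality at fixed $T$ before passing to the limit.
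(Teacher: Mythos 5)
Your proposal is correct and follows essentially the same route as the paper: finiteness via the recentring $\int_0^{\tau\wedge T}f(X_s)\,ds=\int_0^{\tau\wedge T}(f(X_s)-d(x))\,ds+d(x)(\tau\wedge T)$ together with the bound $g\le\zeta^+$, giving $w(x)\le\gamma(x)+\ee^x\{\zeta^+\}$; and lower semi-continuity from $w=\sup_T w_T$ with each $w_T$ continuous by Lemma \ref{lem:continuity_unbounded}. Your explicit iterated-supremum justification of $w=\sup_T w_T$ and the remark that $w(x)\ge g(x)$ are just slightly more detailed versions of what the paper leaves implicit.
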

\begin{proof}
By \ref{ass:undis_large_dev}, recalling that $d(x) < 0$, we have for any bounded stopping time $\tau$:
\begin{align*}
\ee^x \left\{ \int_0^\tau f(X_s) ds + g(X_\tau) \right\} = \ee^x \left\{ \int_0^\tau \big( f(X_s) - d(x) \big) ds + d(x) \tau + g(X_\tau) \right\} \le \gamma(x) + \ee^x \{ \zeta^+ \}.
\end{align*}
Stopping problem \eqref{eqn:infinite_stopping} is equivalent to optimising over the set of all bounded stopping times, c.f. \eqref{eqn:infinite_stopping_1}. Hence $w(x) \le \gamma(x) + \ee^x\{ \zeta^+ \} < \infty$. Also, $w_T(x)$ converges to $w(x)$  from below for any fixed $x$. Functions $w_T$ are continuous (Lemma \ref{lem:continuity_unbounded}), hence $w$ is lower semi-continuous.
\end{proof}

Define
\begin{align*}
\tau_\ve &= \inf \{ t \ge 0:\ w(X_t) \le g(X_t) + \ve \}, \\
\tau^* &= \inf \{ t \ge 0:\ w(X_t) \le g(X_t)  \}.
\end{align*}
These are stopping times due to the lower semi-continuity of $w-g$ and the right-continuity of the process $X_t$. In the following theorem we prove their optimality.

\begin{theorem}\label{thm:tau_star} Under the assumptions  \ref{ass:weak_feller}, \ref{ass:speed_ergodic}, \ref{ass:upper_bound_g}-\ref{ass:undis_large_dev} and \ref{ass:uniform_sup_integrability} satisfied for each $T>0$,
the stopping time $\tau^*$ is optimal for $w$ and $\ee^x \{ \tau^* \} \le Z(x)$, where
\begin{equation}\label{eqn:Mx}
Z(x) = \frac{\gamma(x) + \ee^x\{\zeta^+\}- g(x) + 1}{-d(x)}.
\end{equation}
Moreover, $\tau_\ve$ is $\ve$-optimal and $\tau^* = \lim_{\ve \to 0} \tau_\ve$.
\end{theorem}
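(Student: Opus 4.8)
The plan is to realise $\tau_\ve$ and $\tau^*$ as limits of the finite-horizon $\ve$-optimal stopping times supplied by Theorem \ref{thm:w_T_stopping}, transport the identity \eqref{eq:epsopt} to the infinite horizon, and control expected stopping times through assumption \ref{ass:undis_large_dev}. For each $T$ I set $\sigma_T = \inf\{s \le T:\ g(X_s) + \ve \ge w_{T-s}(X_s)\}$, the finite-horizon $\ve$-optimal time, so that by \eqref{eq:epsopt} we have $w_T(x) = \ee^x\{\int_0^{\sigma_T} f(X_s)\,ds + w_{T-\sigma_T}(X_{\sigma_T})\}$. Since $w_t \le w$, the infinite-horizon stopping region $\{w \le g+\ve\}$ is contained in each finite-horizon region $\{w_{T-s}\le g+\ve\}$, which gives $\sigma_T \le \tau_\ve$; and because $w_t \uparrow w$ is increasing in the horizon, the region $\{w_{T-s}\le g+\ve\}$ shrinks as $T$ grows, so $\sigma_T$ is non-decreasing in $T$ and $\sigma_T \uparrow \tilde\tau$ for some $\tilde\tau \le \tau_\ve$.

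The delicate step, which I expect to be the main obstacle, is to show $\tilde\tau = \tau_\ve$. At the hitting time we have, by right-continuity, $g(X_{\sigma_T}) + \ve \ge w_{T-\sigma_T}(X_{\sigma_T})$. For a fixed $S$ and all large $T$ one has $T-\sigma_T \ge S$, so $w_{T-\sigma_T}(X_{\sigma_T}) \ge w_S(X_{\sigma_T})$; letting $T \to \infty$ and using joint continuity of $(t,x)\mapsto w_t(x)$ (Lemma \ref{lem:continuity_unbounded}), together with quasi-left-continuity of $(X_t)$ to pass $X_{\sigma_T}\to X_{\tilde\tau}$, gives $\liminf_T w_{T-\sigma_T}(X_{\sigma_T}) \ge w_S(X_{\tilde\tau})$; taking the supremum over $S$ and using $w_S \uparrow w$ yields $\liminf_T w_{T-\sigma_T}(X_{\sigma_T}) \ge w(X_{\tilde\tau})$. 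Since the left-hand side of the stopping inequality tends to $g(X_{\tilde\tau})+\ve$, I obtain $w(X_{\tilde\tau}) \le g(X_{\tilde\tau})+\ve$, which forces $\tilde\tau \ge \tau_\ve$ and hence $\tilde\tau=\tau_\ve$. The care needed in handling the simultaneous limits in horizon and spatial argument of $w_{T-\sigma_T}(X_{\sigma_T})$ is the crux.

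Next I obtain integrability and the bound $Z$. Writing $\int_0^{\sigma_T} f = \int_0^{\sigma_T}\big(f-d(x)\big) + d(x)\sigma_T$, assumption \ref{ass:undis_large_dev} bounds the expectation of the first term by $\gamma(x)$; moreover $w_{T-\sigma_T}(X_{\sigma_T}) \le g(X_{\sigma_T})+\ve \le \zeta^+ + \ve$ and $w_T(x)\ge g(x)$. Substituting into the identity for $w_T(x)$ and rearranging (recall $d(x)<0$) gives $-d(x)\,\ee^x\{\sigma_T\} \le \gamma(x)+\ee^x\{\zeta^+\}-g(x)+\ve$, uniformly in $T$; Fatou's lemma and $\sigma_T\uparrow\tau_\ve$ then yield $\ee^x\{\tau_\ve\} \le \big(\gamma(x)+\ee^x\{\zeta^+\}-g(x)+\ve\big)/(-d(x))$, so $\tau_\ve$ is integrable. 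For $\ve$-optimality I use $w_{T-\sigma_T}(X_{\sigma_T})\le g(X_{\sigma_T})+\ve$ to get $w_T(x) \le \ee^x\{\int_0^{\sigma_T} f + g(X_{\sigma_T})\} + \ve$, and let $T\to\infty$: the integral converges in $L^1$ (dominated by $\|f\|\tau_\ve$), while $g(X_{\sigma_T})\to g(X_{\tau_\ve})$ with $g(X_{\cdot})\le\zeta^+$ integrable, so the reverse Fatou lemma gives $\limsup_T\ee^x\{g(X_{\sigma_T})\}\le\ee^x\{g(X_{\tau_\ve})\}$. As $w_T(x)\uparrow w(x)$, this produces $w(x)-\ve \le \ee^x\{\int_0^{\tau_\ve} f + g(X_{\tau_\ve})\}$, and since $\tau_\ve$ is integrable Lemma \ref{lem:integrable_equiv} identifies the right-hand side with the value of $\tau_\ve$ in \eqref{eqn:infinite_stopping}; hence $\tau_\ve$ is $\ve$-optimal.

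Finally I send $\ve\downarrow 0$. The region $\{w\le g+\ve\}$ shrinks, so $\tau_\ve \uparrow \hat\tau \le \tau^*$, and the bound of the previous step with $\ve\le 1$ together with monotone convergence gives $\ee^x\{\hat\tau\}\le Z(x)$ with $Z$ as in \eqref{eqn:Mx}. Passing $\ve\to 0$ in the $\ve$-optimality inequality, again via $L^1$-convergence of the integral and reverse Fatou for $g \le \zeta^+$, yields $w(x)\le \ee^x\{\int_0^{\hat\tau} f + g(X_{\hat\tau})\}$, so $\hat\tau$ is optimal. To identify $\hat\tau$ with $\tau^*$ I invoke lower semicontinuity of $w$ (Lemma \ref{lem:lsc_w}) and continuity of $g$: from $w(X_{\tau_\ve})\le g(X_{\tau_\ve})+\ve$ and $X_{\tau_\ve}\to X_{\hat\tau}$ one gets $w(X_{\hat\tau}) \le \liminf_\ve w(X_{\tau_\ve}) \le g(X_{\hat\tau})$, whence $\hat\tau \ge \tau^*$; combined with $\hat\tau\le\tau^*$ this gives $\tau^* = \hat\tau = \lim_{\ve\to0}\tau_\ve$, which is therefore optimal and satisfies $\ee^x\{\tau^*\}\le Z(x)$.
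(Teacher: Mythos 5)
Your proof is correct, but it takes a genuinely different route from the paper's. The paper first establishes (Lemma \ref{lem:super_mart}) that $Z_t = \int_0^t f(X_s)\,ds + w(X_t)$ is a right-continuous supermartingale and derives the optional-sampling inequality \eqref{eqn:undisc_bellman_ineq}; it then applies that inequality to a sequence of bounded $\frac1m$-optimal stopping times $\sigma_m$ (Lemma \ref{lem:undisc_sigma_m}) paired with $\tau_\ve$, and obtains the integrability of $\tau_\ve$ separately from Lemma \ref{lem:bound_for_stop}, which bounds the expectation of an arbitrary $\ve$-optimal stopping time in terms of $\gamma(x)$ and $\zeta^+$. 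You bypass the supermartingale machinery entirely: you work directly with the finite-horizon $\ve$-optimal hitting times $\tau^\ve_T$ and the identity \eqref{eq:epsopt}, prove that they increase to $\tau_\ve$, and extract both the expectation bound and the $\ve$-optimality by passing to the limit in \eqref{eq:epsopt}. Both arguments ultimately rest on the same ingredients — the finite-horizon theory of Theorem \ref{thm:w_T_stopping}, quasi-left-continuity, Fatou under the domination $g \le \zeta^+$, and Lemma \ref{lem:integrable_equiv} — and the passage $\ve \to 0$ is handled identically. What your route buys is that the convergence of the finite-horizon stopping rules to the infinite-horizon one (your ``delicate step'' with the double limit in $S$ and $T$) is proved explicitly; the paper's argument implicitly needs the analogous fact when it asserts $\sigma_m \wedge \tau_\ve \to \tau_\ve$ in its dominated-convergence step, but does not spell it out, so your version is in this respect more complete. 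What the paper's route buys is the supermartingale property of $Z_t$ and the inequality \eqref{eqn:undisc_bellman_ineq}, which are structural results of independent use beyond this theorem.
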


The proof will be preceded by auxiliary lemmas in which we will use the assumptions listed above without stating them explicitely.

\begin{lemma}\label{lem:bound_for_stop}
For every  $\ve$-optimal stopping time $\sigma$ we have
\[
\ee^x \{ \sigma \} \le \frac{\gamma(x) + \ee^x\{\zeta^+\}-w(x) + \ve}{-d(x)},
\]
where $\zeta^+$ was defined in assumption \ref{ass:upper_bound_g}.
\end{lemma}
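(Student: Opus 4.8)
The plan is to exploit assumption \ref{ass:undis_large_dev}, which supplies the strictly negative constant $d(x)$ that lets us convert a bound on the reward into a bound on the expected stopping time. Recall that $\sigma$ being $\ve$-optimal means
\[
\liminf_{T\to\infty}\ee^x\Big\{\int_0^{\sigma\wedge T}f(X_s)ds+g(X_{\sigma\wedge T})\Big\}\ge w(x)-\ve.
\]
Since $\sigma$ may be unbounded, I would work throughout with the truncations $\sigma\wedge T$, which are \emph{bounded} stopping times, and only pass to the limit $T\to\infty$ at the very end. For each fixed $T$ I would split the reward by adding and subtracting $d(x)$ inside the integral:
\[
\ee^x\Big\{\int_0^{\sigma\wedge T}f(X_s)ds+g(X_{\sigma\wedge T})\Big\}
=\ee^x\Big\{\int_0^{\sigma\wedge T}\big(f(X_s)-d(x)\big)ds\Big\}+d(x)\,\ee^x\{\sigma\wedge T\}+\ee^x\{g(X_{\sigma\wedge T})\}.
\]

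Next I would bound the first and third terms. Because $\sigma\wedge T$ is bounded, the inner $\liminf$ in the definition of $\gamma(x)$ is attained, so the centred-integral term is at most $\gamma(x)$. For the terminal term I would use $g\le g^+\le\zeta^+$ pointwise, whence $\ee^x\{g(X_{\sigma\wedge T})\}\le\ee^x\{\zeta^+\}<\infty$ by \ref{ass:upper_bound_g}. Combining these estimates and rearranging, using $-d(x)>0$, gives for every $T$
\[
\ee^x\{\sigma\wedge T\}\le\frac{\gamma(x)+\ee^x\{\zeta^+\}-A_T}{-d(x)},\qquad A_T:=\ee^x\Big\{\int_0^{\sigma\wedge T}f(X_s)ds+g(X_{\sigma\wedge T})\Big\}.
\]

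Finally I would let $T\to\infty$. On the left, monotone convergence yields $\ee^x\{\sigma\wedge T\}\uparrow\ee^x\{\sigma\}$. On the right, taking $\liminf_T$ turns $-A_T$ into $-\limsup_T A_T$; and since $\limsup_T A_T\ge\liminf_T A_T\ge w(x)-\ve$ by $\ve$-optimality, this produces exactly
\[
\ee^x\{\sigma\}\le\frac{\gamma(x)+\ee^x\{\zeta^+\}-w(x)+\ve}{-d(x)}.
\]

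The point requiring care — more a subtlety than a genuine obstacle — is the interplay of $\liminf$ and $\limsup$ in this last limit: the $\ve$-optimality hypothesis controls $\liminf_T A_T$, whereas passing to the limit on the right naturally yields $\limsup_T A_T$, and the inequality survives only because $\limsup\ge\liminf$ pushes it in the favourable direction. One should also record that the bound $\liminf_T A_T\ge w(x)-\ve$ is finite, so $A_T$ is eventually bounded below and the truncation argument is legitimate.
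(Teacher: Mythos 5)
Your proposal is correct and follows essentially the same route as the paper's proof: the same decomposition $f = (f - d(x)) + d(x)$, the bounds $\gamma(x)$ and $\ee^x\{\zeta^+\}$ for the two reward terms, rearrangement using $-d(x)>0$, and passage to the limit $T\to\infty$ reconciling $\liminf$ with $\limsup$ via the $\ve$-optimality of $\sigma$. The subtlety you flag at the end is precisely the step the paper handles by "letting $\limsup_{T\to\infty}$" in its displayed inequality.
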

\begin{proof}
For any stopping time $\sigma $ and $T>0$  we have
\begin{align*}
\ee^x \Big\{ \int_0^{\sigma \wedge T} f(X_s)ds + g(X_{\sigma \wedge T})\Big\}
&=
\ee^x \Big\{ \int_0^{\sigma\wedge T} \big(f(X_s)-d(x)\big)ds + d(x) \big(\sigma \wedge T \big) + g(X_{\sigma \wedge T})\Big\}\\
&\le
\gamma(x) + \ee^x \{\zeta^+ \} + d(x) \ee^x\{ \sigma \wedge T\}.
\end{align*}
Therefore
\begin{equation}\label{estimation}
-d(x)\ee^x \big\{ \sigma \wedge T \big\}\le \gamma(x) + \ee^x \{\zeta^+ \}  - \ee^x \Big\{ \int_0^{\sigma\wedge T} f(X_s)ds + g(X_{\sigma\wedge T})\Big\}.
\end{equation}
If $\sigma$ is $\ve$-optimal then \[\liminf_{T\to \infty}\ee^x \Big\{ \int_0^{\sigma\wedge T} f(X_s)ds + g(X_{\sigma\wedge T})\Big\}\geq w(x)-\ve\]
and letting $\limsup_{T\to \infty}$ in \eqref{estimation} we complete the proof.
\end{proof}

The above lemma implies that the expectation of every $\ve$-optimal stopping time $\sigma$ with $\ve < 1$ is bounded by $Z(x)$ defined in \eqref{eqn:Mx}.

\begin{lemma}\label{lem:undisc_sigma_m}
For every $x \in E$, there exists a non-decreasing sequence $\sigma_m$ of bounded $\frac1m$-optimal stopping times for $w(x)$.
\end{lemma}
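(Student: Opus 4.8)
The plan is to realise the $\frac1m$-optimal stopping times as the finite-horizon optimal stopping times supplied by Theorem \ref{thm:w_T_stopping}. Recall from the proof of Lemma \ref{lem:lsc_w} that $w_T(x)$ increases to $w(x)$ as $T \to \infty$. Fixing $x$, I would choose an increasing sequence $T_m \uparrow \infty$ with $w_{T_m}(x) \ge w(x) - \frac1m$ and set
\[
\sigma_m = \tau_{T_m} = \inf\{s \le T_m : g(X_s) \ge w_{T_m - s}(X_s)\},
\]
the smallest optimal stopping time for the finite-horizon problem $w_{T_m}$. Each $\sigma_m$ is bounded by $T_m$, which settles boundedness.

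For $\frac1m$-optimality, since $\sigma_m$ is optimal for $w_{T_m}$ we have $\ee^x\{\int_0^{\sigma_m} f(X_s)\,ds + g(X_{\sigma_m})\} = w_{T_m}(x) \ge w(x) - \frac1m$. Because $\sigma_m \le T_m$ is bounded, for every $T \ge T_m$ one has $\sigma_m \wedge T = \sigma_m$, so the $\liminf$ defining the payoff of $\sigma_m$ in \eqref{eqn:infinite_stopping} equals this constant value; comparing with \eqref{eqn:infinite_stopping_1} shows $\sigma_m$ is $\frac1m$-optimal for $w(x)$.

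The crux --- and the step carrying the real content --- is the monotonicity of $(\sigma_m)$, which I would derive from monotonicity of the map $T \mapsto \tau_T$. The key observation is that $u \mapsto w_u(y)$ is non-decreasing for each fixed $y$, since enlarging the horizon enlarges the class of admissible stopping times (also $w_0 = g$, so the stopping condition always holds at $s = T$ and $\tau_T \le T$). Consequently, for $T_1 \le T_2$ and any $s \le T_1$ one has $w_{T_1 - s}(X_s) \le w_{T_2 - s}(X_s)$, so the horizon-$T_2$ stopping condition $g(X_s) \ge w_{T_2 - s}(X_s)$ implies the horizon-$T_1$ condition $g(X_s) \ge w_{T_1 - s}(X_s)$. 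Thus on $[0, T_1]$ the horizon-$T_2$ stopping region is contained in the horizon-$T_1$ one, and a first-entrance time into a smaller set is larger, giving $\tau_{T_2} \ge \tau_{T_1}$ (and $\tau_{T_2} > T_1 \ge \tau_{T_1}$ if the $T_2$-region is never entered on $[0,T_1]$). Taking $T_1 = T_m$ and $T_2 = T_{m+1}$ yields $\sigma_m \le \sigma_{m+1}$, completing the construction. The one point demanding care is the passage from the set inclusion to the inequality of first-entrance times: one must know the infimum defining $\tau_T$ is genuinely attained in a closed stopping region, which is exactly what the right-continuity of the supermartingale $Z^T$ from Theorem \ref{thm:w_T_stopping} provides, and this is where I would be most careful.
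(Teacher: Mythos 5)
Your proposal is correct and follows essentially the same route as the paper: approximate $w$ from below by the finite-horizon values $w_T$, take $\sigma_m=\tau_{T_m}$ for suitably chosen horizons, and deduce monotonicity of $\sigma_m$ from monotonicity of $T\mapsto w_T$ (the paper states this last step in one line; your expansion via inclusion of stopping regions is exactly the intended argument, and note that the inequality of first-entrance times needs only $A\subseteq B\Rightarrow\inf A\ge\inf B$, so no attainment of the infimum is actually required).
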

\begin{proof}
Functions $w_T(x)$ approximate $w(x)$ from below. By Theorem \ref{thm:w_T_stopping}, stopping problems $w_T(x)$ admit optimal solutions of the form
\[
\tau_T = \inf \{ t \in [0, T]:\ g(X_t) \ge w_{T-t}(X_t) \}.
\]
As $w_T$ are non-decreasing in $T$, $\tau_T$ are non-decreasing in $T$. Taking $T(m) = \inf \{ T \ge 0:\ w(x) - w_T(x) \le \frac1m \}$, we can set $\sigma_m = \tau_{T(m)}$.
\end{proof}


\begin{lemma}\label{lem:super_mart}
The process $Z_t := \int_0^t f(X_s) ds + w(X_t)$ is a right-continuous $\prob^x$-supermartingale for any $x \in E$. Moreover, for a bounded stopping time $\sigma$ and an arbitrary stopping time $\tau$
\begin{equation}\label{eqn:undisc_bellman_ineq}
\ee^{x} \Big \{ \int_0^\sigma f(X_s) ds + g(X_\sigma) \Big\}
\le \ee^{x} \Big\{ \int_0^{\tau \wedge \sigma} f(X_s) ds + \ind{\sigma < \tau} g(X_{\sigma}) + \ind{\sigma \ge \tau} w(X_{\tau}) \Big\}.
\end{equation}
\end{lemma}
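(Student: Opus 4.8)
The plan is to establish the supermartingale property of $Z_t$ by passing to the limit in the supermartingale property of the truncated processes $Z^{T}_t = \int_0^t f(X_s)ds + w_{T-t}(X_t)$ from Theorem \ref{thm:w_T_stopping}, and then to derive the dynamic-programming inequality \eqref{eqn:undisc_bellman_ineq} from the supermartingale property combined with the defining inequality $w \ge g$.

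First I would fix $x$ and show that $Z_t$ is a $\prob^x$-supermartingale. Recall from Lemma \ref{lem:lsc_w} that $w_T \uparrow w$ pointwise as $T \to \infty$. For $s \le t$ and fixed horizon parameter, Theorem \ref{thm:w_T_stopping} gives that $Z^{S}_t = \int_0^t f(X_u)du + w_{S-t}(X_t)$ is a supermartingale for each $S$; in particular $\ee^x\{Z^{S}_t \mid \ef_s\} \le Z^{S}_s$. The idea is to let $S \to \infty$ on both sides. On the right, $w_{S-s}(X_s) \uparrow w(X_s)$ by monotone convergence (the integral term is fixed and bounded). On the left, I need $\ee^x\{w_{S-t}(X_t) \mid \ef_s\} \to \ee^x\{w(X_t)\mid \ef_s\}$, which follows by monotone convergence for conditional expectations since $w_{S-t}(X_t) \uparrow w(X_t)$ and the sequence is bounded above by the integrable majorant coming from Lemma \ref{lem:lsc_w} (namely $w(X_t) \le \gamma(X_t) + \ee^{X_t}\{\zeta^+\}$) and below by a fixed integrable quantity. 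This yields $\ee^x\{Z_t \mid \ef_s\} \le Z_s$. Right-continuity of $t \mapsto Z_t$ follows from the right-continuity of $(X_t)$ together with the continuity of $w$ established as lower semicontinuity in Lemma \ref{lem:lsc_w}; more carefully, right-continuity of the supermartingale can be inherited from the right-continuous versions $Z^{S}$ and the monotone limit, or invoked via the standard regularisation of supermartingales with right-continuous filtration.

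Next I would prove \eqref{eqn:undisc_bellman_ineq}. The key observation is that for the bounded stopping time $\sigma$, optional sampling applied to the supermartingale $Z_t$ between $\tau \wedge \sigma$ and $\sigma$ (both bounded, since $\sigma$ is bounded) gives
\[
\ee^x\Big\{ \int_0^{\sigma} f(X_s)ds + w(X_\sigma)\Big\} \le \ee^x\Big\{ \int_0^{\tau \wedge \sigma} f(X_s)ds + w(X_{\tau \wedge \sigma})\Big\}.
\]
On the left I then use $w(X_\sigma) \ge g(X_\sigma)$, which holds everywhere by definition of $w$, to replace $w(X_\sigma)$ by $g(X_\sigma)$ and obtain a lower bound for the left-hand side of the target inequality. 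On the right I split according to $\{\sigma < \tau\}$ and $\{\sigma \ge \tau\}$: on $\{\sigma < \tau\}$ we have $\tau \wedge \sigma = \sigma$ so $w(X_{\tau \wedge \sigma}) = w(X_\sigma) \ge g(X_\sigma)$, while on $\{\sigma \ge \tau\}$ we have $\tau \wedge \sigma = \tau$ so $w(X_{\tau \wedge \sigma}) = w(X_\tau)$. Assembling these gives exactly the right-hand side of \eqref{eqn:undisc_bellman_ineq}, after noting the integral term $\int_0^{\tau \wedge \sigma} f$ is common; the inequality $w(X_\sigma) \ge g(X_\sigma)$ on the left and the rewriting on the right combine to produce the stated bound.

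The main obstacle I anticipate is the justification of optional sampling and the interchange of limit and conditional expectation under only one-sided integrability control. Since $f$ is bounded and $\sigma$ is bounded, the integral term is harmless, but $w(X_{\sigma})$ and $w(X_\tau)$ are only bounded above (via $\zeta^+$ and $\gamma$) and may be unbounded below; I would therefore need to verify the integrability required for optional sampling, using Lemma \ref{lem:lsc_w} for the upper bound and the finiteness of $w$ together with the structure $w \ge g$ and the integrability of $\zeta^+$ from \ref{ass:upper_bound_g} to control the negative part. The monotone convergence step for conditional expectations likewise hinges on having a fixed integrable dominating function, which is supplied by the bound $w(X_t) \le \gamma(X_t) + \ee^{X_t}\{\zeta^+\}$; care is needed to ensure this majorant is $\prob^x$-integrable, which follows from assumption \ref{ass:undis_large_dev} and \ref{ass:upper_bound_g}.
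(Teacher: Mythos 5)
Your derivation of \eqref{eqn:undisc_bellman_ineq} contains a directional error that breaks the argument. Your chain is: (target LHS) $\le \ee^x\{\int_0^\sigma f\,ds + w(X_\sigma)\} \le \ee^x\{\int_0^{\tau\wedge\sigma} f\,ds + w(X_{\tau\wedge\sigma})\}$, and then you want to identify the last expression with the target RHS by splitting on $\{\sigma<\tau\}$ and $\{\sigma\ge\tau\}$. But on $\{\sigma<\tau\}$ the target RHS carries $g(X_\sigma)$ while your expression carries $w(X_{\tau\wedge\sigma})=w(X_\sigma)\ge g(X_\sigma)$; replacing $w(X_\sigma)$ by $g(X_\sigma)$ there makes the quantity \emph{smaller}, so you end up with (target LHS) $\le B$ and (target RHS) $\le B$ for the same $B$, from which nothing follows. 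The problem is that you applied $g(X_\sigma)\le w(X_\sigma)$ on all of $\Omega$ at the first step, and there is no way to undo it on $\{\sigma<\tau\}$. The paper's proof avoids this by localising: it applies optional sampling in conditional form, $\ee^x\{Z_\sigma\mid\ef_{\tau\wedge\sigma}\}\le Z_{\tau\wedge\sigma}$, multiplies by the $\ef_{\tau\wedge\sigma}$-measurable indicator $\ind{\sigma\ge\tau}$ so that the supermartingale inequality and the bound $w(X_\sigma)\ge g(X_\sigma)$ are used only on $\{\sigma\ge\tau\}$, and on $\{\sigma<\tau\}$ simply adds $\ind{\sigma<\tau}g(X_\sigma)$ to both sides without ever comparing $g$ and $w$ there. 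You need this (or an equivalent event-by-event argument) rather than the unconditional optional sampling inequality.

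For the supermartingale part your outline matches the paper's (monotone limit of the supermartingales $Z^T_t$), but the integrability justification is not sound: you propose the majorant $w(X_t)\le\gamma(X_t)+\ee^{X_t}\{\zeta^+\}$ and claim its integrability from \ref{ass:undis_large_dev} and \ref{ass:upper_bound_g}. Assumption \ref{ass:undis_large_dev} only gives $\gamma(x)<\infty$ pointwise (and \ref{ass:unif_undis_large_dev} boundedness on compacts); neither yields $\ee^x\{\gamma(X_t)\}<\infty$. The paper obtains $w(X_t)\in L^1(\prob^x)$ without this, directly from the supermartingale property of $Z^T$: $\ee^x\{w_{T-t}(X_t)\}\le w_T(x)+\|f\|t\le w(x)+\|f\|t$, and then monotone convergence in $T$. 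Similarly, lower semicontinuity of $w$ plus right-continuity of paths does not give right-continuity of $t\mapsto Z_t$; the paper invokes the theorem that the increasing limit of right-continuous supermartingales with integrable supremum of means is right-continuous, which is the route you should make explicit.
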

\begin{proof}
By Theorem \ref{thm:w_T_stopping}, the process $Z_t^T := \int_0^t f(X_s) ds + w_{T-t}(X_t)$, $t \in [0, T]$, is a right-continuous $\prob^x$-supermartingale. Therefore, $w_T(x) \ge \ee^x \{ \int_0^t f(X_s) ds + w_{T-t}(X_t) \}$ and so $\ee^x \{ w_{T-t} (X_t) \} \le w_T(x) + \|f\| t$. Since $w_{T-t}(x)$ is increasing in $T$, the monotone convergence theorem implies $\ee^x \{w(X_t)\} \le w(x) + \|f\| t$ and $w(X_t) \in L^1(\prob^x)$.

Notice that $Z^T_t$ and $w_{T-t}$ are increasing in $T$, so $\ee^x \{ Z^T_t \} \le \|f\| t + \ee^x \{ w(X_t) \}$. Hence $Z_t = \sup_{T \ge t} Z_t^T$ is a $\prob^x$-integrable process which is right-continuous by \cite[Theorem T16, Chapter VI]{Meyer1966}. We will show that $Z_t$ equals $\tl Z_t = \int_0^t f(X_s) ds + w(X_t)$ and is a supermartingale. Since $\tl Z_t - Z^T_t = w(X_t) - w_{T-t}(X_t)$, by monotone convergence theorem $Z^T_t$ converges to $\tl Z_t$ in $L^1(\prob^x)$ and due to monotonicity also pointwise, hence $\tl Z_t = Z_t$. This also proves that the supermartingale property of $(Z^T_t)$ is transferred to $(Z_t)$.

Since $\sigma$ is a bounded stopping time, the optional sampling theorem yields $\ee^x \{ Z_{\sigma} | \ef_{\tau \wedge \sigma} \} \le Z_{\tau \wedge \sigma}$. This reads
\(
\int_0^{\tau \wedge \sigma} f(X_s) ds + w(X_{\tau \wedge \sigma}) \ge \ee^{x} \Big \{ \int_0^\sigma f(X_s) ds + w(X_\sigma) \Big| \ef_{\tau \wedge \sigma} \Big\}.
\)
Hence,
\begin{align*}
\int_0^{\tau \wedge \sigma} f(X_s) ds + \ind{\sigma \ge \tau} w(X_{\tau \wedge \sigma})
&\ge
\ee^{x} \Big \{ \int_0^\sigma f(X_s) ds + \ind{\sigma \ge \tau} w(X_\sigma) \Big| \ef_{\tau \wedge \sigma} \Big\}\\
&\ge
\ee^{x} \Big \{ \int_0^\sigma f(X_s) ds + \ind{\sigma \ge \tau} g(X_\sigma) \Big| \ef_{\tau \wedge \sigma} \Big\}.
\end{align*}
Adding $\ind{\sigma < \tau} g(X_\sigma)$ to both sides completes the proof.
\end{proof}

\begin{proof}[Proof of Theorem \ref{thm:tau_star}]
Let $\sigma_m$ be a sequence of $\frac1m$-optimal stopping times. 
Apply \eqref{eqn:undisc_bellman_ineq} to $\sigma_m$ and $\tau_\ve$ and notice that $w(X_{\tau_\ve}) \le g(X_{\tau_\ve}) + \ve$ due to the continuity of $g$, lower semicontinuity of $w$ and right-continuity of the process $(X_t)$:
\begin{align*}
w(x) - \frac1m
&\le \ee^{x} \Big\{ \int_0^{\sigma_m\wedge \tau_\ve} f(X_s) ds + \ind{\sigma_m < \tau_\ve} g(X_{\sigma_m}) + \ind{\sigma_m \ge \tau_\ve} w(X_{\tau_\ve}) \Big\}\\
&\le \ee^{x} \Big\{ \int_0^{\sigma_m\wedge \tau_\ve} f(X_s) ds + \ind{\sigma_m < \tau_\ve} g(X_{\sigma_m}) + \ind{\sigma_m \ge \tau_\ve} \big(g(X_{\tau_\ve}) + \ve \big)\Big\}\\
&\le \ee^{x} \Big\{ \int_0^{\sigma_m\wedge \tau_\ve} f(X_s) ds + g(X_{\sigma_m \wedge \tau_\ve}) \Big\} + \ve.
\end{align*}
Since by Lemma \ref{lem:bound_for_stop} the stopping time $\tau_\ve$ is $\prob^x$-integrable, dominated convergence theorem implies $\lim_{m \to \infty} \ee^{x} \big\{ \int_0^{\sigma_m\wedge \tau_\ve} f(X_s) ds \big\} = \ee^{x} \big\{ \int_0^{\tau_\ve} f(X_s) ds \big\}$. Recalling that $g(X_{\sigma_m \wedge \tau_\ve})$ is bounded from above by an integrable random variable $\zeta^+$, Fatou's lemma yields $\limsup_{m \to \infty} \ee^{x} \{g(X_{\sigma_m \wedge \tau_\ve}) \} \le \ee^{x} \{ g(X_{\tau_\ve})\}$, where we used continuity of $g$ and quasi left-continuity of $(X_t)$. Combining these results gives
\(
w(x) \le \ee^{x} \Big\{ \int_0^{\tau_\ve} f(X_s) ds + g(X_{\tau_\ve}) + \ve \Big\},
\)
so, using Lemma \ref{lem:integrable_equiv}, $\tau_\ve$ is $\ve$-optimal.

Stopping times $\tau_\ve$ are increasing in $\ve$ with the expectation bounded by $Z(x)$, so $\tau_0 = \lim_{\ve \to 0} \tau_\ve$ is well-defined and $\ee^x \{ \tau_0 \} \le Z(x)$. For any $0 < \ve \le \eta$ we have $g(X_{\tau_\ve}) \ge w(X_{\tau_\ve}) - \eta$. Using the quasi left-continuity of $(X_t)$ and lower semicontinuity of $w$, we take the limit $\ve \to 0$ and then $\eta \to 0$ to obtain $g(X_{\tau_0}) \ge w(X_{\tau_0})$. So $\tau_0 \ge \tau^*$. This means that $\tau^*$ is finite and $\ee^x \{ \tau^* \} \le Z(x)$. Its optimality follows in the same way as $\ve$-optimality of $\tau_\ve$.
\end{proof}

For the continuity of $w$, we need a version of assumption \ref{ass:undis_large_dev} which is uniform over compact sets 
\begin{assumption}
\item[(C3')]\label{ass:unif_undis_large_dev} There is a function $d:E \to (-\infty, 0)$ such that for any compact set $\Gamma \subset E$ we have
$\sup_{x\in \Gamma} d(x)<0$ and
\[
\sup_{x\in \Gamma}\gamma (x) =\sup_{x\in \Gamma} \sup_{\tau} \liminf_{T \to \infty} \ee^x \Big\{ \int_0^{\tau \wedge T} \big(f(X_s) - d(x)\big) ds \Big\} < \infty.
\]
\end{assumption}

\begin{prop} \label{prop:continuous_w}
Under the assumptions of Theorem \ref{thm:tau_star} and \ref{ass:unif_upper_bound_g}-\ref{ass:unif_undis_large_dev}, the function $w$ is continuous.
\end{prop}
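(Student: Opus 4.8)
The plan is to build on what Lemma~\ref{lem:lsc_w} already gives: $w$ is lower semicontinuous and is the increasing pointwise limit of the continuous functions $w_T$ (continuity of $w_T$ being Lemma~\ref{lem:continuity_unbounded}). Since an increasing pointwise limit of continuous functions is automatically lower semicontinuous but need not be upper semicontinuous, the entire task reduces to upgrading $w_T \uparrow w$ to convergence that is \emph{uniform on every compact set} $\Gamma$; a locally uniform limit of continuous functions is continuous, which would finish the proof. So the real content is a uniform estimate on $w(x)-w_T(x)$.

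To obtain such an estimate I would fix $\ve\in(0,1)$ and use the single stopping rule $\tau_\ve = \inf\{t \ge 0:\ w(X_t) \le g(X_t)+\ve\}$, which by the proof of Theorem~\ref{thm:tau_star} is $\ve$-optimal \emph{simultaneously for every starting point} and is integrable with $\ee^x\{\tau_\ve\} \le Z(x)$ by Lemma~\ref{lem:bound_for_stop}. Since $\tau_\ve \wedge T$ is admissible for $w_T$, combining the $\ve$-optimality of $\tau_\ve$ (passed to the untruncated functional via Lemma~\ref{lem:integrable_equiv}) with the inequality $w_T(x) \ge \ee^x\{\int_0^{\tau_\ve \wedge T} f(X_s)ds + g(X_{\tau_\ve \wedge T})\}$ yields, for every $x$,
\[
w(x) - w_T(x) \le \ve + \Big( \ee^x\Big\{\int_0^{\tau_\ve} f(X_s)ds + g(X_{\tau_\ve})\Big\} - \ee^x\Big\{\int_0^{\tau_\ve \wedge T} f(X_s)ds + g(X_{\tau_\ve \wedge T})\Big\} \Big).
\]

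The bracketed difference is exactly the quantity controlled by Corollary~\ref{cor:uniintegrable equiv}: under \ref{ass:unif_upper_bound_g} and \ref{ass:g_vanish_prime} it tends to $0$ as $T\to\infty$ uniformly over $\Gamma$, provided $x \mapsto \ee^x\{\tau_\ve\}$ is bounded on $\Gamma$. This boundedness is where \ref{ass:unif_undis_large_dev} and \ref{ass:unif_upper_bound_g} enter: on a compact $\Gamma$ we have $\sup_{x\in\Gamma}\gamma(x) < \infty$ and $\sup_{x\in\Gamma} d(x) < 0$ by \ref{ass:unif_undis_large_dev}, $\sup_{x\in\Gamma}\ee^x\{\zeta^+\} < \infty$ from the uniform integrability in \ref{ass:unif_upper_bound_g}, and $\inf_{x\in\Gamma} g(x) > -\infty$ by continuity of $g$; hence $\sup_{x\in\Gamma} Z(x) < \infty$ and therefore $\sup_{x\in\Gamma}\ee^x\{\tau_\ve\} < \infty$. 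Letting $T \to \infty$ and then $\ve \to 0$ then gives $\sup_{x\in\Gamma}\big(w(x)-w_T(x)\big) \to 0$, completing the argument.

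The step I expect to be the main obstacle is precisely this uniform passage to the limit in the bracketed term, because $\tau_\ve$ is only integrable, not bounded, so a naive dominated-convergence bound that is uniform in $x$ is unavailable; the argument must instead route through the uniform integrability packaged in \ref{ass:unif_upper_bound_g}--\ref{ass:g_vanish_prime} (Corollary~\ref{cor:uniintegrable equiv}) combined with the uniform-on-compacts bound on $\ee^x\{\tau_\ve\}$. The delicate point is that the \emph{same} $\ve$-optimal rule $\tau_\ve$ must serve for all $x\in\Gamma$ with a common expected-time bound, so that the two uniformities — over $x$ in the tail estimate of $g$ and in the expected hitting time — can be combined into a single uniform-on-$\Gamma$ conclusion.
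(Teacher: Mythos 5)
Your proof is correct and follows essentially the same route as the paper: sandwich $w_T$ between $w$ and the truncated functional evaluated at a near-optimal hitting time, then invoke Corollary~\ref{cor:uniintegrable equiv} together with the bound $Z(x)$ (uniform on compacts via \ref{ass:unif_undis_large_dev} and \ref{ass:unif_upper_bound_g}) to get locally uniform convergence of $w_T$ to $w$. The only cosmetic difference is that the paper uses the exactly optimal $\tau^*$ from Theorem~\ref{thm:tau_star}, avoiding your extra limit in $\ve$.
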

\begin{proof}
Let $\tau^*$ be the optimal stopping time for $w$ from Theorem \ref{thm:tau_star}. Let
\[
v_T(x) = \ee^x \Big\{ \int_0^{\tau^* \wedge T} f(X_s) ds + g(X_{\tau^* \wedge T}) \Big\}.
\]
Then $v_T(x) \le w_T(x) \le w(x)$ for all $x$ and $T$. By Corollary \ref{cor:uniintegrable equiv}, $v_T$ converges to $w$ uniformly on compact sets as the expectation of $\tau^*$ is bounded on compact sets by assumption \ref{ass:unif_undis_large_dev}, see the definition of $Z(x)$. Hence, $w_T$ converges to $w$ uniformly on compact sets. Continuity of $w_T$ (Lemma \ref{lem:continuity_unbounded}) implies then the continuity of $w$.
\end{proof}

\begin{remark}
When $g$ is bounded from above then assumption \ref{ass:unif_upper_bound_g} holds trivially with $\zeta^+ = \| g^+\|$.
\end{remark}
\begin{remark}
When $q$ is bounded from below and $\mu(f) < 0$ then by \cite[Lemma 2.16]{palczewski2014} the assumption \ref{ass:unif_undis_large_dev} holds with any $d(x)\in [\mu(f),0)$ and $\gamma(x)\leq q(x)-\|q^-\|$.
\end{remark}

Although assumption \ref{ass:unif_undis_large_dev} is not particularly restrictive, it is not clear how to verify it for specific examples. The following theorem relaxes it and proves the continuity of $w$ under the assumption that $\mu(f) < 0$.

\begin{theorem} \label{prop:continuous_w1}
Assume \ref{ass:weak_feller}-\ref{ass:speed_ergodic}, \ref{ass:uniform_sup_integrability} satisfied for each $T>0$, and \ref{ass:unif_upper_bound_g}-\ref{ass:g_vanish_prime}. If $\mu(f)<0$, function $w$ is continuous and $\tau^*$ is an optimal stopping time.
\end{theorem}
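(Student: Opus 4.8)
The plan is to deduce Theorem~\ref{prop:continuous_w1} from the already-established Theorem~\ref{thm:tau_star} and Proposition~\ref{prop:continuous_w} by showing that the hypothesis $\mu(f)<0$, together with the standing assumptions, forces the large-deviation conditions \ref{ass:undis_large_dev} and \ref{ass:unif_undis_large_dev} to hold with the \emph{constant} choice $d(x)\equiv\mu(f)$. The bridge is the centred zero-potential $q$ of Section~\ref{sec:prem}. By Lemma~\ref{lq}, for every bounded stopping time one has $\ee^x\{\int_0^{\tau\wedge T}(f(X_s)-\mu(f))\,ds\}=q(x)-\ee^x\{q(X_{\tau\wedge T})\}$, so taking $d(x)=\mu(f)$ gives
\[
\gamma(x)=\sup_\tau\liminf_{T}\big(q(x)-\ee^x\{q(X_{\tau\wedge T})\}\big)\le q(x)+\sup_{\tau\ \mathrm{bdd}}\ee^x\{q^-(X_\tau)\}.
\]
Since $\mu(f)<0$ is a strictly negative constant, for any compact $\Gamma$ we have $\sup_{x\in\Gamma}d(x)=\mu(f)<0$ automatically, and $\sup_{x\in\Gamma}\gamma(x)<\infty$ would follow once $\sup_{x\in\Gamma}\sup_{\tau\ \mathrm{bdd}}\ee^x\{q^-(X_\tau)\}<\infty$ is known (recall $q$ is continuous, hence bounded on $\Gamma$).

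Thus the entire theorem reduces to the single estimate
\[
\sup_{x\in\Gamma}\ \sup_{\tau\ \mathrm{bdd}}\ \ee^x\{q^-(X_\tau)\}<\infty\qquad\text{for every compact }\Gamma\subset E.
\]
Granting it, I set $d(x)\equiv\mu(f)$, obtaining \ref{ass:undis_large_dev} and \ref{ass:unif_undis_large_dev}; the pointwise conditions \ref{ass:upper_bound_g}, \ref{ass:g_vanish} follow from the uniform \ref{ass:unif_upper_bound_g}, \ref{ass:g_vanish_prime}. Then all hypotheses of Theorem~\ref{thm:tau_star} and Proposition~\ref{prop:continuous_w} are met, so $\tau^*$ is optimal with $\ee^x\{\tau^*\}\le Z(x)$ (Lemma~\ref{lem:bound_for_stop} with $d\equiv\mu(f)$, which is then bounded on compacts), and $w$ is continuous.

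The main obstacle is precisely this uniform control of the running minimum of the potential. Since $f$ is bounded, assumption \ref{ass:speed_ergodic} yields the pointwise bound $|q(x)|\le\|f-\mu(f)\|\,K(x)\int_0^\infty h(t)\,dt$, whence $q^-\le C\,K$; the trouble is that \ref{ass:speed_ergodic} controls $K(X_T)$ only at fixed times, while we need a bound uniform over all bounded stopping times. I would start from the martingale $N_t:=q(X_t)+\int_0^t(f(X_s)-\mu(f))\,ds$, whose martingale property is exactly the identity in Lemma~\ref{lq}, and write $q^-(X_\tau)=\big(\int_0^\tau(f(X_s)-\mu(f))\,ds-N_\tau\big)^+\le\|f-\mu(f)\|\,\tau+N_\tau^-$. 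On a fixed window $\tau\le T$ the submartingale $N^-$ gives $\ee^x\{N_\tau^-\}\le\ee^x\{|N_T|\}\le C\ee^x\{K(X_T)\}+\|f-\mu(f)\|T<\infty$ by \ref{ass:speed_ergodic}; the genuine difficulty is that this bound degrades linearly in the horizon $T$, so the crux is to remove the horizon dependence, which is where $\mu(f)<0$ and the integrable ergodic rate $h$ must cooperate (the negative drift preventing near-optimal stopping times from being long, and the decay $h$ forcing $q^-(X_t)$ to equilibrate). I expect essentially all the real work to sit in this horizon-uniform estimate.

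Finally, with the estimate secured the conclusion is mechanical: Proposition~\ref{prop:continuous_w} upgrades the pointwise convergence $w_T\uparrow w$ to uniform convergence on compacts (the expectation of $\tau^*$ being bounded on compacts through $Z$), and continuity of each $w_T$ (Lemma~\ref{lem:continuity_unbounded}) then transfers to $w$; optimality of $\tau^*$ comes directly from Theorem~\ref{thm:tau_star}. An equivalent and perhaps cleaner packaging is to perform the substitution $\tilde g=g-q$, $\tilde w=w-q$, under which, by Lemma~\ref{lq}, $w$ becomes the value of a stopping problem with \emph{constant} running reward $\mu(f)<0$ and terminal reward $\tilde g$ (the stopping sets, and hence $\tau^*$, are unchanged since $\tilde w\le\tilde g\iff w\le g$); there \ref{ass:undis_large_dev} is trivial, and the same potential estimate is exactly what is needed to verify that $\tilde g$ inherits \ref{ass:upper_bound_g}, \ref{ass:g_vanish} and their uniform versions.
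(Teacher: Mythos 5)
There is a genuine gap. You correctly reduce the theorem to verifying \ref{ass:unif_undis_large_dev} (after which Proposition \ref{prop:continuous_w} and Theorem \ref{thm:tau_star} indeed finish the job mechanically), and your identity $\gamma(x)\le q(x)+\sup_{\tau\ \mathrm{bdd}}\ee^x\{q^-(X_\tau)\}$ via Lemma \ref{lq} is a correct reduction for the constant choice $d(x)\equiv\mu(f)$. But the entire content of the theorem is then concentrated in the estimate $\sup_{x\in\Gamma}\sup_{\tau\ \mathrm{bdd}}\ee^x\{q^-(X_\tau)\}<\infty$, which you do not prove and, as you yourself acknowledge, your submartingale bound on $N_\tau^-$ degrades linearly in the horizon. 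This step cannot be waved through: assumption \ref{ass:speed_ergodic} only gives $q^-\le C\,K$ with $K$ bounded on compacts and $\ee^x\{K(X_T)\}<\infty$ at \emph{fixed} times, so there is no control of $K(X_\tau)$ (hence of $q^-(X_\tau)$) over arbitrary bounded stopping times, and no mechanism in the hypotheses forces such a bound for the original $f$. An argument that ends with ``I expect essentially all the real work to sit in this estimate'' has not proved the theorem.

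The paper avoids this obstacle entirely by a domination trick rather than by estimating $q^-$ for the given $f$. It first treats the case where the level set $\{x: f(x)\le\mu(f)\}$ is compact: there, by Lemmas 2.19, 2.18 and 2.16 of \cite{palczewski2014}, the zero-potential is bounded from below and \ref{ass:unif_undis_large_dev} follows (this is exactly the situation of Remark \ref{rem:suffundis}). For general $f$ with $\mu(f)<0$ it builds $\bar f=f\vee(z_N f)\ge f$ with a cutoff $z_N$ chosen so that $\mu(\bar f)\le\mu(f)/2<0$ and $\{x:\bar f(x)\le\mu(\bar f)\}$ is compact; then $\bar f$ satisfies \ref{ass:unif_undis_large_dev}, and since the functional defining $\gamma$ is monotone in the running reward, $f\le\bar f$ inherits \ref{ass:unif_undis_large_dev} with the same $d(x)$. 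If you want to salvage your route, you would need to prove your horizon-uniform estimate for the potential of a \emph{modified} reward whose negative part of the potential is bounded --- which is precisely what the paper's construction delivers without any stopping-time estimate on $q^-$. Your final suggestion of substituting $\tilde g=g-q$ runs into the same wall: verifying that $\tilde g$ inherits \ref{ass:upper_bound_g} and \ref{ass:g_vanish} again requires control of $q^-(X_t)$ that the assumptions do not provide for the original $f$.
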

\begin{proof}
This proof is based on ideas from the proof of Theorem 2.28 in \cite{palczewski2014}.

Assume first that the set $\{ x \in E:\ f(x) \le \mu(f) \}$ is compact. By \cite[Lemma 2.19]{palczewski2014} the zero-potential $q$ is bounded from below and Lemma 2.18 and 2.16 in \cite{palczewski2014} imply that assumption \ref{ass:unif_undis_large_dev} holds.

 Take now an arbitrary continuous bounded function $f$ with $\mu(f) < 0$. Denote by $B_n$ a ball with radius $n$ and some fixed centre independent of $n$. Let $z_n(x) = 1 - \rho(x, B_n) \wedge 1$, where $\rho(x, B_n)$ is the distance of $x$ from the ball $B_n$. Take $N$ large enough so that $\int_{B_N^c} |f(x)| \mu(dx) < -\mu(f)/4$, where $B_N^c$ is the complement of $B_N$. Define $\hat f(x) = z_N(x) f(x)$ and $\bar f = f \vee \hat f$. Then
\[
\mu(\bar f) \le \int_{B_N} f(x) \mu(dx) + \int_{B^c_N} |f(x)| \mu(dx) \le \mu(f) - \mu(f) / 4 - \mu(f) / 4 = \mu(f) / 2.
\]
Hence, $\mu(\bar f) < 0$. Moreover, $\bar f (x) \ge 0$ for $x \in B_{N+1}^c$, so the set $\{x\in E:\ \bar f(x) \le \mu(\bar f) \}$ is contained in $B_{N+1}$ and compact. The function $\bar f$ satisfies the conditions in the first part of the proof and, hence, assumption \ref{ass:unif_undis_large_dev}. However, since $\bar f \ge f$, then function $f$ satisfies assumption \ref{ass:unif_undis_large_dev} with the same $d(x)$. It suffices now to apply Proposition \ref{prop:continuous_w} to prove the continuity of $w$ and Theorem \ref{thm:tau_star} to obtain that the stopping time $\tau^*$ is optimal.
\end{proof}

\bibliographystyle{amsplain}

\bibliography{references-undis}

\end{document}